\newtheorem{theorem}{Theorem}[section]
\newtheorem{proposition}[theorem]{Proposition}
\newtheorem{lemma}[theorem]{Lemma}
\newtheorem{corollary}[theorem]{Corollary}
\theoremstyle{definition}
\newtheorem{example}[theorem]{Example}
\theoremstyle{remark}
\newtheorem{remark}[theorem]{Remark}
\numberwithin{equation}{section}
\begin{document}

\title[Finite Wavelet Frames over Prime Fields]{Finite equal norm Parseval Wavelet Frames over Prime Fields}

\author[A. Rahimi and N. Seddighi]{Asghar Rahimi and Niloufar Seddighi$^*$}

\address{Asghar Rahimi, Faculty Mathematics, University of Maragheh, Maragheh, Iran.}
\email{rahimi@maragheh.ac.ir}

\address{Niloufar Seddighi, Faculty Mathematics, University of Maragheh, Maragheh, Iran.}
\email{stu\_seddighi@maragheh.ac.ir}


\begin{abstract}
In the framework of wave packet analysis, finite wavelet systems are
particular classes of  finite wave packet systems. In this paper, using
a scaling matrix on a permuted version of the discrete Fourier transform (DFT) of
system generator, we derive a locally-scaled version of the DFT of system genarator
and obtain a finite equal-norm Parseval wavelet frame over prime fields.
We also give a characterization of all multiplicative subgroups
of the cyclic multiplicative group, for which the associated wavelet systems
form frames. Finally, we present some concrete examples as applications of our results.\\

\textbf{Keywords:}  Finite wavelet frames, equal-norm Parseval frames, prime fields.\\
\textbf{MSC(2010):}  Primary 42C15, 42C40, 65T60; Secondary 30E05, 30E10.
\end{abstract}

\maketitle

\section{\bf Introduction}

The theory of frames  in finite dimensional Hilbert spaces has been recognized through its central role in
signal representation methods \cite{ff, {P.Bal}}. The best known frames in applications including data transmission such as packet communication networks \cite{G.S}, multiple antenna coding \cite{HHS.MAC}, perfect reconstruction filter banks (PRFBs) \cite{KDG.FBS}, quantization error problems in quantized frame expansions \cite{GKV.QF}, and some areas of quantum communication theory \cite{BKP.QU} are finite equal-norm Parseval frames (ENPFs). The potential of these types of frames is due to the high capability of erasure-resilient, speedy implementation of  reconstruction as well as the structure of equal energy frame vectors.
The first comprehensive review of relevant studies on general equal-norm tight frames (ENTFs), and
equal-norm Parseval frames (ENPFs) as the subclasses of ENTFs have been studied in \cite{CK}.
 Some classes of ENPFs such as (general) harmonic frames, Gabor frames (finite discrete Gabor frames) and
 ENPFs with single generator or more generators which have group structures, have been also introduced in \cite{CK}. Several other classes of ENPFs can be found in \cite{CL}.
Moreover, a number of constructive methods to  ENTFs and ENPFs from finite sets of vectors
have been considered \cite{CK, CL}.
Recently,  some convergent constructive methods to ENPFs have been introduced   \cite{BC.AR}.
Nevertheless, there is a lack of variety in classes of structured
ENPFs, obtained from some generating vectors or with a simple structure which are important in the point of low rate of computation and complexity. Furthermore, in practice, only one special class of ENPFs might not be guaranteed to be suitable for all applications.

Traditionally, the classical Gabor transforms and wavelet transforms  have been used to perform
time-frequency (resp. time-scale) analysis of a given function/signal in a Hilbert space, see \cite{arefi.k1,
 arefi.z2, arefi.z1}. In the last decades, generalized methods of Gabor transforms and wavelet transforms
have been developed \cite{AGHF.claswelt}. In the framework of wave packet analysis \cite{Chris.Rahimi, AGHF.thf}, finite wavelet systems are
particular classes of  finite wave packet systems which have been recently introduced, see \cite{AGHF.claswels, {AGHF.wpt.ff}, {AGHF.wpt.n}, {AGHF.cwps.p}}.
Extending the finite wavelet frames over prime fields in \cite{AGHF.SFWFPF},
the analytic structure, group theoretical and abstract aspects  of
the nature of such systems have been studied in \cite{{AGHF.SFWFPF}, {AGHF.MMP}, {RS.AC}}.

In Theorem \ref{parseval}, using a scaling matrix (not necessarily uniform) on a permuted version of the discrete Fourier transform (DFT) of a given window function (system generator) under certain conditions, we
derive a locally-scaled version of the DFT of the window function and obtain
a finite equal-norm Parseval  wavelet frame over prime fields.
In \cite{RS.AC}, we presented a matrix representation of the DFT of the window function, which
was based on a generator of the cyclic multiplicative group of integers modulo  $p$, where $p$ is a prime number.
This notion is a quite useful tool to determine whether a given system
forms a frame.
In this paper, we further apply this matrix notion. For any non-zero window function using this representation,
we give a characterization of all multiplicative subgroups of the cyclic multiplicative group modulo $p$, for which the associated wavelet systems form frames.
Although, this notion depends on a generator of the cyclic multiplicative group (not necessarily a specific generator)
and there is not a general method to find its generator, however this is the key point in cryptosystems.

Construction of ENPFs in such systems relies on a local modifying of the frequency components
of the system generator.
For a prime integer $p$, the multiplicative group modulo $p$
is cyclic and based on that, Proposition \ref{part} provides a convenient correspondence between the multiplicative subgroups and the main group.
This leads us to simply manipulate the DFT of the system generator in order to obtain ENPFs of such systems.

This paper is orgonized as follows.
Section 2 contains some basic definitions and results of Fourier transform on cyclic groups and finite frames.
An overview for the notion and structure of finite wavelet groups
appears in section 3.
In section 4, we give main results of the current paper.
The results will be  accompanied by some concrete examples.

\section{\bf Preliminaries}

Throughout this paper, $p$ is a prime positive integer.
Here, we state a brief review of notations, basics and preliminaries of
Fourier analysis and computational harmonic analysis over finite cyclic groups. For more details, we refer the readers to \cite{AGHF.SFWFPF, AGHF.wpt.ff, AGHF.wpt.n, Pf}.
Here we employ notations of the author in \cite{AGHF.claswels, AGHF.claswelt, AGHF, AGHF.cwps.p, AGHF.SFWFPF, AGHF.wpt.ff, AGHF.wpt.n, AGHF.thf, AGHF.MMP}.

For a finite  group $G$,
\begin{align*}
\mathbb{C}^G=\{\mathbf{x}:G\to\mathbb{C}\}
\end{align*}
is a $|G|$-dimensional complex vector space. For any vector in $\mathbb{C}^G$
the indices are taken to be elements in finite group
$G$.
This space is a Hilbert space under the inner product
\begin{align*}
\langle\mathbf{x},\mathbf{y}\rangle=\sum_{g\in G}\mathbf{x}(g)\overline{\mathbf{y}(g)}
\end{align*}
for any $\mathbf{x},\mathbf{y}\in\mathbb{C}^G$. The induced norm is the $\|.\|_2$-norm.
Let $\mathbb{Z}_p$ denotes the cyclic group of $p$ elements $\{0,...,p-1\}$. Then for $\mathbb{C}^{\mathbb{Z}_p}$,
we write $\mathbb{C}^p$.
Also
\begin{align*}
\|\mathbf{x}\|_0=|\{k\in\mathbb{Z}_p:\mathbf{x}(k)\not=0\}|
\end{align*}
counts non-zero entries in $\mathbf{x}\in\mathbb{C}^p$.

For $k\in \mathbb{Z}_p$, the translation operator $T_k:\mathbb{C}^p\to\mathbb{C}^p$ is defined by
\begin{align*}
T_k\mathbf{x}(s)=\mathbf{x}(s-k), \hspace{1cm}(\mathbf{x}\in\mathbb{C}^p,\hspace{.2cm} s\in\mathbb{Z}_p).
\end{align*}
For $\ell\in \mathbb{Z}_p$, the modulation operator $M_\ell:\mathbb{C}^p\to\mathbb{C}^p$ is defined by
\begin{align*}
M_\ell\mathbf{x}(s)=e^{-2\pi i\ell s/p}\mathbf{x}(s), \hspace{1cm}(\mathbf{x}\in\mathbb{C}^p,\hspace{.2cm} s\in\mathbb{Z}_p).
\end{align*}
These operators are unitary operators in the $\|.\|_2$-norm. For any $\ell,k\in \mathbb{Z}_p$, we have $(T_k)^*=T_{p-k}$ and $(M_{\ell})^*=M_{p-\ell}$.
The unitary discrete Fourier transform (DFT) of  $x\in \mathbb{C}^p$ is defined
by
$\widehat{\mathbf{x}}(\ell)=\frac{1}{\sqrt{p}}\sum_{k=0}^{p-1}\mathbf{x}(k)\overline{\mathbf{w}_{\ell}(k)},$
for all $\ell\in\mathbb{Z}_p$ where for all
$\ell,k\in\mathbb{Z}_p$ we have $\mathbf{w}_{\ell}(k) = e^{2\pi i\ell k/p}.$
Therefore, the DFT of $\mathbf{x}\in\mathbb{C}^p$ at $\ell\in \mathbb{Z}_p$ can be written as
\begin{equation}\label{DFT}
\widehat{\mathbf{x}}(\ell)=\mathcal{F}_p(\mathbf{x})(\ell)
=\frac{1}{\sqrt{p}}\sum_{k=0}^{p-1}\mathbf{x}(k)\overline{\mathbf{w}_{\ell}(k)}
=\frac{1}{\sqrt{p}}\sum_{k=0}^{p-1}\mathbf{x}(k)e^{-2\pi i\ell k/p}.
\end{equation}
For $x\in\mathbb{C}^p$, the inverse discrete Fourier transform(IDFT) is defined by
\begin{equation*}
\mathbf{x}(k)=\frac{1}{\sqrt{p}}\sum_{\ell=0}^{p-1}\widehat{\mathbf{x}}(\ell)\mathbf{w}_{\ell}(k)=
\frac{1}{\sqrt{p}}\sum_{\ell=0}^{p-1}\widehat{\mathbf{x}}(\ell)e^{2\pi i\ell k/p},\ \ 0\le k\le p-1.
\end{equation*}
By $\|.\|_2$-norm, DFT is a unitary transform. Thus, for all $\mathbf{x}\in \mathbb{C}^p$
Parseval formula $\|\mathcal{F}_{p}(\mathbf{x})\|_{2}=\|\mathbf{x}\|_{2}$ satisfies.
The Polarization identity implies $\langle\mathbf{x},\mathbf{y}\rangle
=\langle\widehat{\mathbf{x}},\widehat{\mathbf{y}}\rangle$ for $\mathbf{x},\mathbf{y}\in\mathbb{C}^p$
which is known the Plancherel formula. The unitary DFT (\ref{DFT}) satisfies
\begin{equation*}
\widehat{T_k\mathbf{x}}=M_k\widehat{\mathbf{x}}, \widehat{M_\ell\mathbf{x}}=T_{p-\ell}\widehat{\mathbf{x}}
\hspace{0.5cm}(\mathbf{x}\in\mathbb{C}^p,\hspace{.2cm}(k,\ell\in\mathbb{Z}_p).
\end{equation*}
A finite sequence $\mathfrak{A}=\{\mathbf{y}_j:0\le j\le M-1\}\subset\mathbb{C}^p$
is called a frame (or finite frame) for $\mathbb{C}^p$,
if there exists a positive constant $0<A\leq B<\infty$ such that
\begin{equation}\label{f.c}
A\|\mathbf{x}\|_{2}^2\le \sum_{j=0}^{M-1}|\langle
\mathbf{x},\mathbf{y}_j\rangle|^2\le B\|\mathbf{x}\|_{2}^2,\  \  \ (\mathbf{x}\in\mathbb{C}^{p}).
\end{equation}
If (\ref{f.c}) satisfies only the upper bound then $\mathfrak{A}$ is a Bessel sequence. Any finite sequence in
$\mathbb{C}^p$ is a Bessel sequence, so that the condition in (\ref{f.c})
can be reduced to
\begin{equation*}
A\|\mathbf{x}\|_{2}^2\le \sum_{j=0}^{M-1}|\langle
\mathbf{x},\mathbf{y}_j\rangle|^2,\  \  \ (\mathbf{x}\in\mathbb{C}^{p}).
\end{equation*}
for $0<A<\infty$. If $A=B$, then $\mathfrak{A}$ is a $A-$tight frame and if $A=B=1$, it is called a Parseval frame.
Moreover, if all the frame elements have the same norm it is called equal-norm frame and if all the elements have
norm 1 it is called unit-norm frame.

If $\mathfrak{A}=\{\mathbf{y}_j:0\le j\le M-1\}$ is a frame for $\mathbb{C}^p$, the synthesis operator
$F:\mathbb{C}^M\to\mathbb{C}^p$ is defined by
\begin{align*}
F\{c_j\}_{j=0}^{M-1}=\sum_{j=0}^{M-1}c_j\mathbf{y}_j, \  \  \ (\{c_j\}_{j=0}^{M-1}\in\mathbb{C}^M).
\end{align*}
The adjoint operator $F^*:\mathbb{C}^p\to\mathbb{C}^M$ which is known as analysis operator is defined by
\begin{align*}
F^*\mathbf{x}=\{\langle\mathbf{x},\mathbf{y}_j\rangle\}_{j=0}^{M-1},\  \  \ \ (\mathbf{x}\in\mathbb{C}^p).
\end{align*}
The frame operator $S:\mathbb{C}^p\to\mathbb{C}^p$ is defined by
\begin{equation*}\label{S}
\mathbf{x}\mapsto S\mathbf{x}=FF^*\mathbf{x}
=\sum_{j=0}^{M-1}\langle\mathbf{x},\mathbf{y}_j\rangle\mathbf{y}_j,\  \  \  \ (\mathbf{x}\in\mathbb{C}^{p}),
\end{equation*}
and $\mathbf{x}=\sum_{j=0}^{M-1}\langle\mathbf{x},S^{-1}\mathbf{y}_j\rangle\mathbf{y}_j
=\sum_{j=0}^{M-1}\langle\mathbf{x},\mathbf{y}_j\rangle S^{-1}\mathbf{y}_j.$
The redundancy of the finite frame $\mathfrak{A}$ is defined  by
 ${\rm red}_{\mathfrak{A}}=M/p$ where $M=|\mathfrak{A}|$.

\section{\bf Construction of Wavelet Frames over Prime Fields}

In this section, we briefly state structure and basic properties of
cyclic dilation operators (cf. \cite{f.g.h.t, AGHF.SFWFPF, AGHF, j}). We shall present an overview for the notion and structure of finite wavelet groups over prime fields.

\subsection{\bf Structure of Finite Wavelet Group over Prime Fields}

The set
\begin{equation*}
\mathbb{U}_p:=\mathbb{Z}_p-\{0\}=\{1,...,p-1\},
\end{equation*}
is a finite multiplicative Abelian group of order $p-1$
with respect to the multiplication module $p$ with the identity element ${1}$.
The multiplicative inverse for $m\in\mathbb{U}_p$
is $m_p$  satisfying $m_pm+np=1$ for some $n\in\mathbb{Z}$ (cf.  \cite{Hardy}).

For $m\in\mathbb{U}_p$,  the cyclic dilation operator
$D_m:\mathbb{C}^p\to\mathbb{C}^p$  is defined by
\begin{align*}
D _m\mathbf{x}(k):=\mathbf{x}(m_pk)
\end{align*}
for $\mathbf{x}\in\mathbb{C}^p$ and $k\in\mathbb{Z}_p$,
where $m_p$ is the multiplicative inverse of $m$ in $\mathbb{U}_p$.

In the following propositions, we state some basic properties of the cyclic dilations.

\begin{proposition}\label{prop30}
{\it Let $p$ be a positive prime integer. Then
\begin{enumerate}[\normalfont (i)]
\item For $(m,k)\in\mathbb{U}_p\times\mathbb{Z}_p$, we have $D_mT_k=T_{mk}D_m$.
\item For $m,m'\in\mathbb{U}_p$, we have $D_{mm'}=D_mD_{m'}$.
\item For $(m,k),(m',k')\in\mathbb{U}_p\times\mathbb{Z}_p$, we have
$T_{k+mk'}D_{mm'}=T_kD_mT_{k'}D_{m'}$.
\item For $(m,\ell)\in \mathbb{U}_p\times\mathbb{Z}_p$, we have $D_mM_\ell=M_{m_p\ell}D_m$.
\end{enumerate}
}\end{proposition}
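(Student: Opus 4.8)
The plan is to verify (i), (ii), and (iv) by direct pointwise evaluation---fixing $\mathbf{x}\in\mathbb{C}^p$ and $s\in\mathbb{Z}_p$ and unwinding the definitions of $T_k$, $M_\ell$, $D_m$---and then to obtain (iii) as a purely formal consequence of (i) and (ii). The single arithmetic fact that drives everything is the defining congruence $m\,m_p\equiv 1\pmod p$ for the multiplicative inverse. I would also note once at the outset that all products of residues occurring as arguments of $\mathbf{x}$ or inside the modulation characters $e^{-2\pi i r s/p}$ may be reduced modulo $p$ without harm, since $\mathbf{x}$ is a function on $\mathbb{Z}_p$ and $k\mapsto e^{2\pi i k/p}$ is $p$-periodic.

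For (i): evaluating $D_mT_k\mathbf{x}$ at $s$ gives $\mathbf{x}(m_ps-k)$, whereas $T_{mk}D_m\mathbf{x}$ at $s$ gives $\mathbf{x}\bigl(m_p(s-mk)\bigr)=\mathbf{x}(m_ps-m_pmk)=\mathbf{x}(m_ps-k)$, using $m_pm\equiv1\pmod p$. For (ii): one first checks $(mm')_p=m_pm'_p$ by uniqueness of inverses in the group $\mathbb{U}_p$, since $(m_pm'_p)(mm')=(m_pm)(m'_pm')\equiv1\pmod p$; then $D_mD_{m'}\mathbf{x}(k)=\mathbf{x}(m'_pm_pk)=\mathbf{x}\bigl((mm')_pk\bigr)=D_{mm'}\mathbf{x}(k)$. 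For (iv): $D_mM_\ell\mathbf{x}$ at $s$ equals $e^{-2\pi i\ell m_ps/p}\,\mathbf{x}(m_ps)$, and $M_{m_p\ell}D_m\mathbf{x}$ at $s$ equals $e^{-2\pi i m_p\ell s/p}\,\mathbf{x}(m_ps)$, which agree.

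Statement (iii) then needs no computation with functions at all: I would chain the commutation relation (i), the additivity $T_kT_{k'}=T_{k+k'}$, and the composition law (ii) to write $T_kD_mT_{k'}D_{m'}=T_k(T_{mk'}D_m)D_{m'}=(T_kT_{mk'})(D_mD_{m'})=T_{k+mk'}D_{mm'}$.

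Honestly there is no real obstacle here: the proposition is bookkeeping about how dilation interacts with translation and modulation in $\mathbb{Z}_p$. The only point demanding a line of care is making the ``reduce mod $p$'' convention for products such as $m_ps$, $mk$, and $m_p\ell$ explicit, so that each of the four identities collapses to a one-line check; once that convention is in place, everything is mechanical, and (iii) is entirely operator-algebraic.
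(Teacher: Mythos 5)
Your verification is correct: each pointwise computation is valid, the key identity $m_p m\equiv 1\pmod p$ is used exactly where needed, the identification $(mm')_p=m_pm'_p$ is justified properly, and deriving (iii) operator-algebraically from (i), (ii), and $T_kT_{k'}=T_{k+k'}$ is sound. The paper itself states this proposition without proof (deferring to the cited literature on cyclic dilations), and your direct pointwise check is precisely the standard argument given there, so there is nothing to add beyond noting that your explicit care about reducing arguments modulo $p$ is the only point of substance and you handled it correctly.
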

The next result also  states some properties of the cyclic dilations.

\begin{proposition}\label{prop31}
{\it Let $p$ be a positive prime integer and $m\in\mathbb{U}_p$.
Then
\begin{enumerate}[\normalfont (i)]
\item The dilation operator $D_m:\mathbb{C}^p\to\mathbb{C}^p$ is a $*$-homomorphism.
\item The dilation operator $D_m:\mathbb{C}^p\to\mathbb{C}^p$ is
unitary in $\|.\|_2$-norm and satisfies
\[
(D_m)^*=(D_m)^{-1}=D_{m_p}.
\]
\item For $\mathbf{x}\in\mathbb{C}^p$, we have
$\widehat{D_m\mathbf{x}}=D_{m_p}\widehat{\mathbf{x}}$.
\end{enumerate}
}\end{proposition}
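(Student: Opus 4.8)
The plan is to handle the three parts separately, each resting on the single elementary observation that, since $m\in\mathbb{U}_p$, multiplication by $m_p$ is a bijection of $\mathbb{Z}_p$ — so $D_m$ is just a permutation of the coordinates — together with the bookkeeping identities $(m_p)_p=m$ and $m_pm\equiv 1\pmod p$. For (i) I would first recall that $\mathbb{C}^p$ is a commutative unital $*$-algebra under pointwise product and complex conjugation, and then unwind $D_m\mathbf{x}(k)=\mathbf{x}(m_pk)$: since $D_m$ is precomposition with $k\mapsto m_pk$, linearity is automatic, $D_m(\mathbf{x}\mathbf{y})(k)=\mathbf{x}(m_pk)\mathbf{y}(m_pk)=(D_m\mathbf{x})(k)(D_m\mathbf{y})(k)$, and $D_m\overline{\mathbf{x}}=\overline{D_m\mathbf{x}}$, which is exactly the $*$-homomorphism property (and $D_m$ fixes the unit, the constant function $1$).

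For (ii), writing $\sigma_m\colon k\mapsto m_pk$ for the induced permutation of $\mathbb{Z}_p$, I would reindex the sum defining $\|D_m\mathbf{x}\|_2^2$ by $j=\sigma_m(k)$ to get $\|D_m\mathbf{x}\|_2^2=\sum_{k}|\mathbf{x}(m_pk)|^2=\sum_{j}|\mathbf{x}(j)|^2=\|\mathbf{x}\|_2^2$; hence $D_m$ is an isometry on the finite-dimensional space $\mathbb{C}^p$, therefore unitary. To identify the inverse I would invoke Proposition \ref{prop30}(ii): $D_{m_p}D_m=D_{m_pm}=D_1=\mathrm{Id}$ because $m_pm\equiv 1\pmod p$, and symmetrically $D_mD_{m_p}=\mathrm{Id}$, so $(D_m)^{-1}=D_{m_p}$; combined with unitarity this forces $(D_m)^*=(D_m)^{-1}=D_{m_p}$. (Equivalently one checks $\langle D_m\mathbf{x},\mathbf{y}\rangle=\langle\mathbf{x},D_{m_p}\mathbf{y}\rangle$ directly by the same change of index.)

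For (iii), I would compute $\widehat{D_m\mathbf{x}}$ straight from the definition (\ref{DFT}) and apply the substitution $j=m_pk$ (so $k=mj$ as both range over $\mathbb{Z}_p$):
\[
\widehat{D_m\mathbf{x}}(\ell)=\frac{1}{\sqrt{p}}\sum_{k=0}^{p-1}\mathbf{x}(m_pk)\,e^{-2\pi i\ell k/p}=\frac{1}{\sqrt{p}}\sum_{j=0}^{p-1}\mathbf{x}(j)\,e^{-2\pi i(m\ell)j/p}=\widehat{\mathbf{x}}(m\ell),
\]
and then recognize the right-hand side as $(D_{m_p}\widehat{\mathbf{x}})(\ell)=\widehat{\mathbf{x}}((m_p)_p\ell)=\widehat{\mathbf{x}}(m\ell)$, which yields $\widehat{D_m\mathbf{x}}=D_{m_p}\widehat{\mathbf{x}}$.

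None of these steps is genuinely difficult; the only point deserving attention — and it recurs in all three parts — is the legitimacy of the change of summation index $k\mapsto m_pk$, which relies on $m_p$ being a unit modulo $p$, together with the careful tracking of $(m_p)_p=m$ when rewriting a transformed expression back in the form of a dilation operator. Otherwise the argument is a routine unfolding of definitions, with the composition law $D_{mm'}=D_mD_{m'}$ from Proposition \ref{prop30}(ii) supplying the one structural identity actually needed, namely $(D_m)^{-1}=D_{m_p}$.
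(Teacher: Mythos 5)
Your proof is correct, and it follows the standard route (the paper itself states this proposition without proof, deferring to the cited literature on cyclic dilations, where exactly this computation appears): $D_m$ is precomposition with the bijection $k\mapsto m_pk$ of $\mathbb{Z}_p$, which gives the $*$-homomorphism and isometry properties, the composition law $D_{mm'}=D_mD_{m'}$ gives $(D_m)^{-1}=(D_m)^*=D_{m_p}$, and the change of index $j=m_pk$ in the DFT sum together with $(m_p)_p=m$ gives $\widehat{D_m\mathbf{x}}=D_{m_p}\widehat{\mathbf{x}}$. No gaps; the one point needing justification, the legitimacy of the reindexing, is exactly the one you address.
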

The underlying set
$$\mathbb{U}_p\times\mathbb{Z}_p=\left\{(m,k):m\in\mathbb{U}_p, k\in\mathbb{Z}_p\right\},$$
equipped with the following group operations
\begin{equation*}
(m,k)\ltimes (m',k'):=(mm',k+mk'),
\end{equation*}
\begin{equation*}
(m,k)^{-1}:=(m_p,m_p.(p-k)),
\end{equation*}
denoted by $\mathbb{W}_p$, is a finite non-Abelian group of order $p(p-1)$  and it is called as
finite affine group on $p$ integers or
 the finite wavelet group associated to the integer $p$ or simply as $p$-wavelet group.

Next proposition states basic properties of the finite wavelet group $\mathbb{W}_p$.

\begin{proposition}
Let $p>2$ be a positive prime integer. Then $\mathbb{W}_p$ is a non-Abelian group of order $p(p-1)$ which contains
 a copy of $\mathbb{Z}_p$ as a normal Abelian subgroup and a copy of $\mathbb{U}_p$
 as a non-normal Abelian subgroup.
\end{proposition}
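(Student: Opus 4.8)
The plan is to recognize $\mathbb{W}_p$ as the external semidirect product of the additive group $(\mathbb{Z}_p,+)$ by the multiplicative group $(\mathbb{U}_p,\cdot)$, where $\mathbb{U}_p$ acts on $\mathbb{Z}_p$ by $m\cdot k:=mk\bmod p$. First I would verify that this action is by automorphisms of $(\mathbb{Z}_p,+)$: for each $m\in\mathbb{U}_p$ the map $k\mapsto mk$ is additive and is bijective because $m$ is invertible mod $p$, and $(mm')k\equiv m(m'k)$, so we obtain a homomorphism $\mathbb{U}_p\to\operatorname{Aut}(\mathbb{Z}_p,+)$. With this action, the standard semidirect-product formulas read
\begin{equation*}
(m,k)(m',k')=(mm',\,k+mk'),\qquad (m,k)^{-1}=(m_p,\,-m_pk)=(m_p,\,m_p(p-k)),
\end{equation*}
which are exactly the operations defining $\mathbb{W}_p$. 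Hence the group axioms (associativity, identity $(1,0)$, inverses) hold automatically; alternatively they can be checked directly in a few lines. Counting pairs gives $|\mathbb{W}_p|=|\mathbb{Z}_p|\,|\mathbb{U}_p|=p(p-1)$.

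Next I would pin down the two subgroups. Set $N:=\{(1,k):k\in\mathbb{Z}_p\}$ and $H:=\{(m,0):m\in\mathbb{U}_p\}$. Since $(1,k)\ltimes(1,k')=(1,k+k')$, the map $k\mapsto(1,k)$ is a group isomorphism onto $N$, so $N$ is an Abelian subgroup isomorphic to $\mathbb{Z}_p$; since $(m,0)\ltimes(m',0)=(mm',0)$, the map $m\mapsto(m,0)$ is a group isomorphism onto $H$, so $H$ is an Abelian subgroup isomorphic to $\mathbb{U}_p$. For normality of $N$ the cleanest argument is that $\pi\colon\mathbb{W}_p\to\mathbb{U}_p$, $\pi(m,k)=m$, is a surjective homomorphism with kernel $N$; equivalently, using $mm_p\equiv1$ and $p-j\equiv-j\pmod p$, one computes
\begin{equation*}
(m,j)\ltimes(1,k)\ltimes(m,j)^{-1}=(m,j+mk)\ltimes(m_p,m_p(p-j))=(1,mk)\in N,
\end{equation*}
so $N$ is normal in $\mathbb{W}_p$.

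The non-Abelian claim and the non-normality of $H$ both follow from one conjugation computation together with the fact that $\mathbb{Z}_p$ is a field. For $m\in\mathbb{U}_p$ and $k\in\mathbb{Z}_p$,
\begin{equation*}
(1,k)\ltimes(m,0)\ltimes(1,k)^{-1}=(m,k)\ltimes(1,p-k)=(m,\,k+m(p-k))=(m,\,k(1-m)),
\end{equation*}
and $k(1-m)=0$ in $\mathbb{Z}_p$ forces $k=0$ or $m=1$. Because $p>2$, we have $|\mathbb{U}_p|=p-1\ge 2$, so we may pick $m\neq 1$ and also $k\neq 0$; then $(m,k(1-m))\notin H$, showing $H$ is not normal, while $(m,0)\ltimes(1,k)=(m,mk)\neq(m,k)=(1,k)\ltimes(m,0)$ shows $\mathbb{W}_p$ is non-Abelian. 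This is precisely where the hypothesis $p>2$ is used: for $p=2$, $\mathbb{U}_2$ is trivial and the statement degenerates.

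There is no deep obstacle here; the argument is essentially bookkeeping. The two points that need genuine care are confirming that $\mathbb{U}_p$ acts on $(\mathbb{Z}_p,+)$ by \emph{automorphisms} — this is what justifies the semidirect-product description and hence all the group axioms at once — and handling the arithmetic modulo $p$ in the conjugation identities, in particular invoking the absence of zero divisors in $\mathbb{Z}_p$ exactly at the step establishing non-normality of $H$ (and non-commutativity of $\mathbb{W}_p$).
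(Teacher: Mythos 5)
Your proof is correct and complete: the identification of $\mathbb{W}_p$ as the semidirect product of $(\mathbb{Z}_p,+)$ by $(\mathbb{U}_p,\cdot)$, the recognition of $N=\{(1,k)\}$ as the kernel of the projection onto $\mathbb{U}_p$, and the conjugation computation $(1,k)\ltimes(m,0)\ltimes(1,k)^{-1}=(m,k(1-m))$ together with the absence of zero divisors in $\mathbb{Z}_p$ (and $p>2$ to pick $m\neq1$, $k\neq0$) settle every claim. The paper itself states this proposition without proof, deferring to the cited literature on the finite affine (wavelet) group, where exactly this semidirect-product verification is the standard argument, so your route coincides with the intended one.
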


\subsection{\bf Wavelet Frames over Prime Fields}

A finite wavelet system for the complex Hilbert space $\mathbb{C}^p$ is
a family or system of the form
\begin{equation*}
\mathcal{W}({\mathbf{y}},\Delta)
:=\left\{\sigma(m,k)\mathbf{y}=T_kD_m\mathbf{y}:(m,k)\in\Delta\subseteq\mathbb{W}_p\right\},
\end{equation*}
for some window signal $\mathbf{y}\in\mathbb{C}^p$ and a subset $\Delta$ of $\mathbb{W}_p$.

If $\Delta=\mathbb{W}_p$ we put $\mathcal{W}(\mathbf{y}):=\mathcal{W}(\mathbf{y},\mathbb{W}_p)$
and it is called a full finite wavelet system over $\mathbb{Z}_p$.
A finite wavelet system which spans $\mathbb{C}^p$ is a frame
and is referred to as a finite wavelet frame over the prime field $\mathbb{Z}_p$.

If $\mathbf{y}\in\mathbb{C}^p$ is a window signal then for $\mathbf{x}\in\mathbb{C}^p$, we have
\begin{equation*}\label{w.r.i}
\langle\mathbf{x},\sigma(m,k){\mathbf{y}}\rangle
=\langle\mathbf{x},T_kD_m{\mathbf{y}}\rangle
=\langle T_{p-k}\mathbf{x},D_m{\mathbf{y}}\rangle,\ \ \  \ ((m,k)\in\mathbb{W}_p).
\end{equation*}

The following proposition states a formulation for wavelet coefficients via Fourier transform.

\begin{proposition}\label{w.f.rep}
{\it Let $\mathbf{x},\mathbf{y}\in\mathbb{C}^p$ and $(m,k)\in\mathbb{W}_{p}$. Then,
$$\langle\mathbf{x},\sigma(m,k){\mathbf{y}}\rangle
=\sqrt{p}\mathcal{F}_p(\widehat{\mathbf{x}}.\overline{\widehat{D_m{\mathbf{y}}}})(p-k).$$
}\end{proposition}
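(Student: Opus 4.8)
The plan is to unwind both sides using the definitions of $\sigma(m,k)$, the translation operator, and the DFT, then recognize the resulting sum as a Fourier transform evaluated at $p-k$. First I would write $\sigma(m,k)\mathbf{y} = T_k D_m \mathbf{y}$ and use the Plancherel formula $\langle \mathbf{x}, T_k D_m \mathbf{y}\rangle = \langle \widehat{\mathbf{x}}, \widehat{T_k D_m \mathbf{y}}\rangle$. Then apply the intertwining relation $\widehat{T_k \mathbf{z}} = M_k \widehat{\mathbf{z}}$ with $\mathbf{z} = D_m \mathbf{y}$, so that the inner product becomes $\langle \widehat{\mathbf{x}}, M_k \widehat{D_m \mathbf{y}}\rangle$. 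Writing this sum out over $\ell \in \mathbb{Z}_p$ gives
\begin{equation*}
\langle \mathbf{x}, \sigma(m,k)\mathbf{y}\rangle = \sum_{\ell=0}^{p-1} \widehat{\mathbf{x}}(\ell)\, \overline{e^{-2\pi i k \ell/p}\, \widehat{D_m\mathbf{y}}(\ell)} = \sum_{\ell=0}^{p-1} \widehat{\mathbf{x}}(\ell)\, \overline{\widehat{D_m\mathbf{y}}(\ell)}\, e^{2\pi i k\ell/p}.
\end{equation*}

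Next I would compare this with the DFT formula (\ref{DFT}) applied to the pointwise product $\mathbf{u} := \widehat{\mathbf{x}}\cdot\overline{\widehat{D_m\mathbf{y}}} \in \mathbb{C}^p$. By definition,
\begin{equation*}
\mathcal{F}_p(\mathbf{u})(\ell') = \frac{1}{\sqrt{p}}\sum_{\ell=0}^{p-1} \mathbf{u}(\ell)\, e^{-2\pi i \ell' \ell/p}.
\end{equation*}
Setting $\ell' = p - k$ and using $e^{-2\pi i (p-k)\ell/p} = e^{-2\pi i \ell}\, e^{2\pi i k\ell/p} = e^{2\pi i k\ell/p}$ (since $\ell \in \mathbb{Z}$), we obtain $\mathcal{F}_p(\mathbf{u})(p-k) = \frac{1}{\sqrt{p}}\sum_{\ell=0}^{p-1}\mathbf{u}(\ell)\, e^{2\pi i k\ell/p}$, which is exactly $\frac{1}{\sqrt{p}}$ times the sum computed above. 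Multiplying by $\sqrt{p}$ yields the claimed identity.

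This argument is essentially a chain of substitutions, so there is no serious obstacle; the only point requiring mild care is the bookkeeping of complex conjugates — making sure the modulation factor $\overline{e^{-2\pi i k\ell/p}} = e^{2\pi i k\ell/p}$ lands with the correct sign so that it matches the character $e^{-2\pi i(p-k)\ell/p}$ appearing in $\mathcal{F}_p$ evaluated at $p-k$ rather than at $k$. One could alternatively phrase the whole computation directly on the spatial side, expanding $\langle \mathbf{x}, T_k D_m\mathbf{y}\rangle = \langle T_{p-k}\mathbf{x}, D_m\mathbf{y}\rangle$ and using Proposition \ref{prop31}(iii) to rewrite $\widehat{D_m\mathbf{y}}$, but the Fourier-side route above is the most direct and keeps all manipulations within the already-stated intertwining identities.
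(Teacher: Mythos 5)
Your proof is correct. The paper itself gives no argument for this proposition---it simply cites Proposition 4.1 of \cite{AGHF.SFWFPF}---and your computation (Plancherel, the intertwining relation $\widehat{T_k\mathbf{z}}=M_k\widehat{\mathbf{z}}$ with the paper's convention $M_k\mathbf{z}(\ell)=e^{-2\pi i k\ell/p}\mathbf{z}(\ell)$, and evaluation of $\mathcal{F}_p$ of the pointwise product at $p-k$, where indices are read modulo $p$ so the case $k=0$ causes no issue) is exactly the standard argument behind the cited result, so it serves as a correct self-contained verification consistent with the paper's conventions.
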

\begin{proof}
See Proposition 4.1 of \cite{AGHF.SFWFPF}.
\end{proof}

Using an analytic approach, the author \cite{AGHF.SFWFPF} has presented a
concrete formulation for the $\|.\|_2$-norm of wavelet coefficients
the formula of which is just stated hereby.

\begin{theorem}\label{f.f.s}
Let $p$ be a positive prime integer, $M$  be a divisor of $p-1$, and let
$\mathbb{M}$ be a multiplicative subgroup of $\mathbb{U}_p$ of order $M$.
Let $\mathbf{y}\in\mathbb{C}^p$ be a window signal and $\mathbf{x}\in\mathbb{C}^p$. Then,
\begin{align*}
&\sum_{m\in\mathbb{M}}\sum_{k\in\mathbb{Z}_p}|\langle\mathbf{x},\sigma(m,k)\mathbf{y}\rangle|^2
\\&=p\left(M|\widehat{{\mathbf{y}}}(0)|^2|\widehat{\mathbf{x}}(0)|^2
+\left(\sum_{m\in\mathbb{M}}|\widehat{{\mathbf{y}}}(m)|^2\right)\left(\sum_{\ell\in\mathbb{M}}|\widehat{\mathbf{x}}(\ell)|^2\right)
+\sum_{\ell\in\mathbb{U}_p-\mathbb{M}}\gamma_\ell(\mathbf{y},\mathbb{M})|\widehat{\mathbf{x}}(\ell)|^2\right),
\end{align*}
where
\[
\gamma_\ell(\mathbf{y},\mathbb{M}):=\sum_{m\in\mathbb{M}}|\widehat{\mathbf{y}}(m\ell)|^2,\ \ \ \ \ (\ell\in\mathbb{U}_p-\mathbb{M}).
\]
\end{theorem}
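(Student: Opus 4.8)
The plan is to expand the left-hand side using the Fourier-side formula for wavelet coefficients from Proposition~\ref{w.f.rep}, and then exploit the orthogonality of the characters of $\mathbb{Z}_p$ to collapse the sum over $k$. Concretely, Proposition~\ref{w.f.rep} gives
\[
\langle\mathbf{x},\sigma(m,k)\mathbf{y}\rangle
=\sqrt{p}\,\mathcal{F}_p\!\left(\widehat{\mathbf{x}}\cdot\overline{\widehat{D_m\mathbf{y}}}\right)(p-k),
\]
so for fixed $m\in\mathbb{M}$ the map $k\mapsto\langle\mathbf{x},\sigma(m,k)\mathbf{y}\rangle$ is, up to the factor $\sqrt{p}$ and the reindexing $k\mapsto p-k$, the DFT of the vector $\widehat{\mathbf{x}}\cdot\overline{\widehat{D_m\mathbf{y}}}\in\mathbb{C}^p$. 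By the Parseval formula for $\mathcal{F}_p$ stated in Section~2,
\[
\sum_{k\in\mathbb{Z}_p}|\langle\mathbf{x},\sigma(m,k)\mathbf{y}\rangle|^2
=p\sum_{\ell\in\mathbb{Z}_p}|\widehat{\mathbf{x}}(\ell)|^2\,|\widehat{D_m\mathbf{y}}(\ell)|^2 .
\]
Next I would use Proposition~\ref{prop31}(iii), namely $\widehat{D_m\mathbf{y}}=D_{m_p}\widehat{\mathbf{y}}$, together with the definition $D_{m_p}\widehat{\mathbf{y}}(\ell)=\widehat{\mathbf{y}}((m_p)_p\ell)=\widehat{\mathbf{y}}(m\ell)$, so that $|\widehat{D_m\mathbf{y}}(\ell)|^2=|\widehat{\mathbf{y}}(m\ell)|^2$, with the convention that the product $m\ell$ is taken modulo $p$ and $m\cdot 0=0$. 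This yields
\[
\sum_{m\in\mathbb{M}}\sum_{k\in\mathbb{Z}_p}|\langle\mathbf{x},\sigma(m,k)\mathbf{y}\rangle|^2
=p\sum_{m\in\mathbb{M}}\sum_{\ell\in\mathbb{Z}_p}|\widehat{\mathbf{x}}(\ell)|^2\,|\widehat{\mathbf{y}}(m\ell)|^2 .
\]

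The remaining work is purely combinatorial: split the inner sum over $\ell\in\mathbb{Z}_p$ according to whether $\ell=0$, $\ell\in\mathbb{M}$, or $\ell\in\mathbb{U}_p\setminus\mathbb{M}$. For $\ell=0$ every term contributes $|\widehat{\mathbf{x}}(0)|^2|\widehat{\mathbf{y}}(0)|^2$, and summing over the $M$ elements of $\mathbb{M}$ gives $M|\widehat{\mathbf{y}}(0)|^2|\widehat{\mathbf{x}}(0)|^2$. For $\ell\in\mathbb{M}$, as $m$ ranges over the subgroup $\mathbb{M}$ so does $m\ell$ (left multiplication by $\ell$ permutes $\mathbb{M}$), hence $\sum_{m\in\mathbb{M}}|\widehat{\mathbf{y}}(m\ell)|^2=\sum_{m\in\mathbb{M}}|\widehat{\mathbf{y}}(m)|^2$ independently of $\ell$; multiplying by $|\widehat{\mathbf{x}}(\ell)|^2$ and summing over $\ell\in\mathbb{M}$ produces the product term $\bigl(\sum_{m\in\mathbb{M}}|\widehat{\mathbf{y}}(m)|^2\bigr)\bigl(\sum_{\ell\in\mathbb{M}}|\widehat{\mathbf{x}}(\ell)|^2\bigr)$. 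For $\ell\in\mathbb{U}_p\setminus\mathbb{M}$ the coset $\mathbb{M}\ell$ is no longer $\mathbb{M}$, and the inner sum is exactly the quantity $\gamma_\ell(\mathbf{y},\mathbb{M})=\sum_{m\in\mathbb{M}}|\widehat{\mathbf{y}}(m\ell)|^2$ by definition, giving the last term $\sum_{\ell\in\mathbb{U}_p\setminus\mathbb{M}}\gamma_\ell(\mathbf{y},\mathbb{M})|\widehat{\mathbf{x}}(\ell)|^2$. Collecting the three contributions and factoring out $p$ yields the claimed identity.

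I expect the only genuinely delicate point to be bookkeeping around the index $0$ and the modular multiplication: one must be careful that $D_{m_p}\widehat{\mathbf{y}}$ evaluated at $\ell$ really is $\widehat{\mathbf{y}}(m\ell \bmod p)$ (using that $(m_p)_p=m$ in $\mathbb{U}_p$), and that the splitting $\mathbb{Z}_p=\{0\}\sqcup\mathbb{M}\sqcup(\mathbb{U}_p\setminus\mathbb{M})$ is handled consistently with $m\cdot 0=0$ so that the $\ell=0$ term is not double-counted. Everything else — the Parseval step and the permutation-of-cosets argument — is routine. Alternatively, since the statement is attributed in spirit to \cite{AGHF.SFWFPF}, one could simply invoke the cited computation there; but the self-contained derivation above via Proposition~\ref{w.f.rep} and Proposition~\ref{prop31}(iii) is short enough to include directly.
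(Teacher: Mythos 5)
Your argument is correct. Note, however, that the paper does not prove Theorem~\ref{f.f.s} in-house: its ``proof'' is a citation to Theorem~4.2 of \cite{AGHF.SFWFPF}, whose derivation (as the subsequent remark indicates) proceeds by direct computation of the cyclic dilations in $\mathbb{M}$, essentially expanding the wavelet coefficients and using character orthogonality on $\mathbb{Z}_p$. Your route packages that computation more cleanly: Proposition~\ref{w.f.rep} expresses $k\mapsto\langle\mathbf{x},\sigma(m,k)\mathbf{y}\rangle$ as $\sqrt{p}$ times a DFT (evaluated at $p-k$, and since $k\mapsto p-k$ is a bijection of $\mathbb{Z}_p$ the $\ell^2$-sum over $k$ is unaffected), so unitarity of $\mathcal{F}_p$ immediately gives $\sum_k|\langle\mathbf{x},\sigma(m,k)\mathbf{y}\rangle|^2=p\sum_{\ell\in\mathbb{Z}_p}|\widehat{\mathbf{x}}(\ell)|^2|\widehat{D_m\mathbf{y}}(\ell)|^2$; the identity $\widehat{D_m\mathbf{y}}(\ell)=D_{m_p}\widehat{\mathbf{y}}(\ell)=\widehat{\mathbf{y}}(m\ell)$ from Proposition~\ref{prop31}(iii) and the definition of $D_{m_p}$ is applied correctly (using $(m_p)_p=m$), and the final splitting of $\mathbb{Z}_p$ into $\{0\}\sqcup\mathbb{M}\sqcup(\mathbb{U}_p\setminus\mathbb{M})$, with left multiplication by $\ell\in\mathbb{M}$ permuting $\mathbb{M}$, reproduces exactly the three terms in the statement with no double counting at $\ell=0$. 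So your proof is a valid, self-contained derivation that makes the theorem a two-line consequence of Proposition~\ref{w.f.rep} plus Parseval, at the cost of relying on Proposition~\ref{w.f.rep} (itself only cited in the paper), whereas the cited original is a from-scratch computation; either is acceptable, and yours has the advantage of making the structural reason for the coset-wise factorization transparent.
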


\begin{proof}
 See Theorem 4.2 of \cite{AGHF.SFWFPF}.
\end{proof}

\begin{remark}
The formulation presented in Theorem \ref{f.f.s} is an analytic
formulation of wavelet coefficients associated to the subgroup $\mathbb{M}$.
In detail, that formulation originated from an analytic
approach which was based on direct computations of cyclic dilations in the subgroup $\mathbb{M}$.
\end{remark}
Next proposition provides a particular partition of the cyclic multiplicative group
$\mathbb{U}_p$. Applying this, a constructive formulation for the $\|.\|_2$-norm of wavelet coefficients
has been achieved in the following theorem.
\begin{proposition}\label{part}
{\it Let $p$ be a positive prime integer, $M$ be a divisor of $p-1$,
and let $\mathbb{M}$ be a multiplicative subgroup of $\mathbb{U}_p$ of order $M$.
Let $\epsilon$ be a generator of the cyclic group $\mathbb{U}_p$ and
$a:=\frac{p-1}{M}$.
Then
\begin{enumerate}[\normalfont (i)]
\item For $0\le r,s\le a-1$, we have $\epsilon^r\mathbb{M}=\epsilon^s\mathbb{M}$ iff $r=s$.
\item $\mathbb{U}_p/\mathbb{M}=\{\epsilon^t\mathbb{M}:0\le t\le a-1\}$.
\end{enumerate}
}
\end{proposition}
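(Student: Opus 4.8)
The plan is to reduce both assertions to the structure theory of finite cyclic groups. The key preliminary observation I would record is that, since $\mathbb{U}_p=\langle\epsilon\rangle$ is cyclic of order $p-1$ and $M\mid p-1$, the subgroup $\mathbb{M}$ of order $M$ must be the \emph{unique} subgroup of that order, namely
\[
\mathbb{M}=\langle\epsilon^{a}\rangle=\{\epsilon^{ja}:0\le j\le M-1\},\qquad a=\tfrac{p-1}{M}.
\]
Indeed $\langle\epsilon^{a}\rangle$ has order $(p-1)/\gcd(a,p-1)=(p-1)/a=M$, and in a cyclic group every subgroup of order $M$ consists precisely of the elements whose order divides $M$, hence equals $\langle\epsilon^{a}\rangle$. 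Everything else follows from this identification.

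For part (i), suppose $\epsilon^{r}\mathbb{M}=\epsilon^{s}\mathbb{M}$ with $0\le r,s\le a-1$. Then $\epsilon^{r-s}\in\mathbb{M}=\langle\epsilon^{a}\rangle$, so $\epsilon^{r-s}=\epsilon^{ja}$ for some integer $j$, which gives $r-s\equiv ja\pmod{p-1}$; since $a\mid p-1$, this forces $a\mid(r-s)$. But $|r-s|\le a-1<a$, so $r-s=0$, i.e.\ $r=s$. The converse implication is immediate.

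For part (ii), I would argue by double inclusion. The inclusion $\{\epsilon^{t}\mathbb{M}:0\le t\le a-1\}\subseteq\mathbb{U}_p/\mathbb{M}$ is trivial. Conversely, every coset has the form $\epsilon^{n}\mathbb{M}$ for some $n\in\mathbb{Z}$; writing $n=qa+t$ with $0\le t\le a-1$ yields $\epsilon^{n}\mathbb{M}=\epsilon^{t}(\epsilon^{a})^{q}\mathbb{M}=\epsilon^{t}\mathbb{M}$, because $(\epsilon^{a})^{q}\in\mathbb{M}$. Hence the two sets of cosets coincide; and by part (i) the listed cosets are pairwise distinct, so this is in fact an irredundant enumeration of the $a=|\mathbb{U}_p/\mathbb{M}|$ elements of the quotient.

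Since the whole argument is elementary, there is no real obstacle; the only points requiring a moment's care are the identification $\mathbb{M}=\langle\epsilon^{a}\rangle$ (resting on uniqueness of subgroups of a given order in a cyclic group) and the divisibility step $a\mid(r-s)\Rightarrow r=s$ in part (i), which is where the restriction $0\le r,s\le a-1$ is used.
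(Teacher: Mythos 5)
Your proof is correct and complete: the identification $\mathbb{M}=\langle\epsilon^{a}\rangle$ via uniqueness of subgroups of a cyclic group, the divisibility argument $a\mid(r-s)$ with $|r-s|<a$ for part (i), and division with remainder on the exponent for part (ii) constitute the standard argument. The paper itself gives no proof here, deferring to Proposition 3.7 of the cited reference, and your self-contained argument is exactly what such a proof amounts to, so there is nothing to flag.
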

\begin{proof}
See Proposition 3.7 of \cite{RS.AC}.
\end{proof}
The following theorem presents a constructive formultion for the $\|.\|_2-$norm of wavelet coeficients.
\begin{theorem}\label{f.f.s.coset}
Let $p$ be a positive prime integer, $M$  be a divisor of $p-1$,
and let $\mathbb{M}$ be a multiplicative subgroup of $\mathbb{U}_p$ of order $M$.
Let $\epsilon$ be a generator of the cyclic group $\mathbb{U}_p$ and
$a:=\frac{p-1}{M}$.
Let $\mathbf{y}\in\mathbb{C}^p$ be a window signal and
$\mathbf{x}\in\mathbb{C}^p$.
Then,
\begin{align*}\label{expan.coset}
&\sum_{m\in\mathbb{M}}\sum_{k=0}^{p-1}|\langle\mathbf{x},T_kD_m\mathbf{y}\rangle|^2
\\&\hspace{1cm}=p\bigg(M{|\widehat{\mathbf{x}}(0)|}^{2} {|\widehat{\mathbf{y}}(0)|}^{2}
+ \sum_{t=0}^{a-1}\bigg(\sum_{\ell\in H_t}{|\widehat{\mathbf{x}}(\ell)|}^{2}\bigg)
\bigg(\sum_{w\in H_t}{|\widehat{\mathbf{y}}(w)|}^{2}\bigg)\bigg),
\end{align*}
where $H_t:=\epsilon^t \mathbb{M}$ for all $0\le t\le a-1$.
\end{theorem}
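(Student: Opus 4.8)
The plan is to derive Theorem \ref{f.f.s.coset} directly from Theorem \ref{f.f.s} by using the coset decomposition provided in Proposition \ref{part} to simplify the ``cross term'' $\sum_{\ell\in\mathbb{U}_p-\mathbb{M}}\gamma_\ell(\mathbf{y},\mathbb{M})|\widehat{\mathbf{x}}(\ell)|^2$. The starting point is the identity
\begin{align*}
\sum_{m\in\mathbb{M}}\sum_{k\in\mathbb{Z}_p}|\langle\mathbf{x},\sigma(m,k)\mathbf{y}\rangle|^2
&=p\Big(M|\widehat{\mathbf{y}}(0)|^2|\widehat{\mathbf{x}}(0)|^2
+\Big(\sum_{m\in\mathbb{M}}|\widehat{\mathbf{y}}(m)|^2\Big)\Big(\sum_{\ell\in\mathbb{M}}|\widehat{\mathbf{x}}(\ell)|^2\Big)\\
&\qquad\qquad+\sum_{\ell\in\mathbb{U}_p-\mathbb{M}}\gamma_\ell(\mathbf{y},\mathbb{M})|\widehat{\mathbf{x}}(\ell)|^2\Big).
\end{align*}
By Proposition \ref{part}(ii), $\mathbb{U}_p$ is the disjoint union of the cosets $H_t=\epsilon^t\mathbb{M}$ for $0\le t\le a-1$, with $H_0=\mathbb{M}$. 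So I would first rewrite the sum over $\mathbb{U}_p-\mathbb{M}$ as $\sum_{t=1}^{a-1}\sum_{\ell\in H_t}$, and recognize that adding the middle term (which is exactly the $t=0$ contribution, since $\sum_{m\in\mathbb{M}}|\widehat{\mathbf{y}}(m)|^2=\sum_{w\in H_0}|\widehat{\mathbf{y}}(w)|^2$ and likewise for $\mathbf{x}$) lets me combine everything into a single sum $\sum_{t=0}^{a-1}$, provided I can show $\gamma_\ell(\mathbf{y},\mathbb{M})=\sum_{w\in H_t}|\widehat{\mathbf{y}}(w)|^2$ whenever $\ell\in H_t$.

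The crux is therefore the claim: for $\ell\in H_t=\epsilon^t\mathbb{M}$, we have $\gamma_\ell(\mathbf{y},\mathbb{M})=\sum_{m\in\mathbb{M}}|\widehat{\mathbf{y}}(m\ell)|^2=\sum_{w\in H_t}|\widehat{\mathbf{y}}(w)|^2$. This follows because the map $m\mapsto m\ell$ is a bijection of $\mathbb{M}$ onto the coset $\mathbb{M}\ell=\ell\mathbb{M}$ (the group $\mathbb{U}_p$ is abelian, so left and right cosets coincide), and since $\ell\in\epsilon^t\mathbb{M}$ we have $\ell\mathbb{M}=\epsilon^t\mathbb{M}\mathbb{M}=\epsilon^t\mathbb{M}=H_t$. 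Hence the multiset $\{m\ell:m\in\mathbb{M}\}$ equals the set $H_t$, and reindexing the sum by $w=m\ell$ gives the claim. I would also note that this identity shows $\gamma_\ell(\mathbf{y},\mathbb{M})$ depends only on the coset of $\ell$, which is the conceptual content making the final ``product of coset-sums'' form possible.

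Plugging the claim back in, the cross term becomes
\begin{align*}
\sum_{\ell\in\mathbb{U}_p-\mathbb{M}}\gamma_\ell(\mathbf{y},\mathbb{M})|\widehat{\mathbf{x}}(\ell)|^2
=\sum_{t=1}^{a-1}\sum_{\ell\in H_t}\Big(\sum_{w\in H_t}|\widehat{\mathbf{y}}(w)|^2\Big)|\widehat{\mathbf{x}}(\ell)|^2
=\sum_{t=1}^{a-1}\Big(\sum_{\ell\in H_t}|\widehat{\mathbf{x}}(\ell)|^2\Big)\Big(\sum_{w\in H_t}|\widehat{\mathbf{y}}(w)|^2\Big),
\end{align*}
and adjoining the $t=0$ term (the middle term of Theorem \ref{f.f.s}) completes the sum to $\sum_{t=0}^{a-1}$, yielding exactly the stated formula after recalling $\sigma(m,k)\mathbf{y}=T_kD_m\mathbf{y}$. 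I do not expect any serious obstacle here: Proposition \ref{part} does all the structural work, and the only real step is the bijection argument identifying $\gamma_\ell$ with the coset-sum of $|\widehat{\mathbf{y}}|^2$; care is needed only to handle the index $\ell=0$ separately (it is not in $\mathbb{U}_p$, so it sits in the isolated $|\widehat{\mathbf{x}}(0)|^2|\widehat{\mathbf{y}}(0)|^2$ term and never enters the coset bookkeeping) and to confirm that the partition in Proposition \ref{part} is genuinely disjoint, which is part (i) of that proposition.
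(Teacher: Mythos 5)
Your proposal is correct, and it follows the route the paper itself indicates: the paper gives no in-text proof (it cites Theorem 3.8 of the companion reference \cite{RS.AC}), but its narrative makes clear that the coset partition of Proposition \ref{part} is exactly the tool used to pass from the analytic formula of Theorem \ref{f.f.s} to the coset form. Your key step --- that $m\mapsto m\ell$ bijects $\mathbb{M}$ onto $\ell\mathbb{M}=H_t$ for $\ell\in H_t$, so $\gamma_\ell(\mathbf{y},\mathbb{M})=\sum_{w\in H_t}|\widehat{\mathbf{y}}(w)|^2$ depends only on the coset --- is sound, and the bookkeeping (the $t=0$ term being the middle term, $\ell=0$ kept separate) is handled correctly.
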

\begin{proof}
See Theorem 3.8 \cite{RS.AC}.
\end{proof}
For a given multiplicative subgroup of $\mathbb{U}_p$, the next theorem
gives necessary and sufficient conditions for a finite wavelet system over prime field to be a frame.
\begin{theorem}\label{m.f.result}
{\it Let $p$ be a positive prime integer, $\epsilon$ be a generator of
$\mathbb{U}_p$,
 $M$ be a divisor of $p-1$,
$\mathbb{M}$ be a multiplicative subgroup of $\mathbb{U}_p$ of order $M$,  and let
$a:=\frac{p-1}{M}$.
Let $\Delta_\mathbb{M}:=\mathbb{M}\times\mathbb{Z}_p$ and $\mathbf{y}\in\mathbb{C}^p$ be a non-zero window signal.
The finite wavelet system $\mathcal{W}(\mathbf{y},\Delta_\mathbb{M})$ is a frame for
$\mathbb{C}^p$ if and only if the following conditions hold
\begin{enumerate}[\normalfont (i)]
\item $\widehat{\mathbf{y}}(0)\neq 0$
\item For each $t\in \{0,...,a-1\}$, there exists $m_t\in \mathbb{M}$ such that  $\widehat{\mathbf{y}}(\epsilon^t m_t )\neq 0$.
\end{enumerate}
}
\end{theorem}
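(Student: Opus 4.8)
The plan is to extract everything directly from the closed-form expression in Theorem~\ref{f.f.s.coset}. Since $\Delta_\mathbb{M}=\mathbb{M}\times\mathbb{Z}_p$, that theorem gives, for every $\mathbf{x}\in\mathbb{C}^p$,
\begin{align*}
\sum_{(m,k)\in\Delta_\mathbb{M}}|\langle\mathbf{x},\sigma(m,k)\mathbf{y}\rangle|^2
&= p\bigg(M|\widehat{\mathbf{y}}(0)|^2|\widehat{\mathbf{x}}(0)|^2\\
&\quad + \sum_{t=0}^{a-1}\bigg(\sum_{w\in H_t}|\widehat{\mathbf{y}}(w)|^2\bigg)\bigg(\sum_{\ell\in H_t}|\widehat{\mathbf{x}}(\ell)|^2\bigg)\bigg),
\end{align*}
with $H_t=\epsilon^t\mathbb{M}$. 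By Proposition~\ref{part}(ii) the sets $H_0,\dots,H_{a-1}$ are precisely the cosets of $\mathbb{M}$ in $\mathbb{U}_p$, hence a partition of $\mathbb{U}_p$; adjoining $\{0\}$ they partition $\mathbb{Z}_p$, so Parseval's formula yields $\|\mathbf{x}\|_2^2=\|\widehat{\mathbf{x}}\|_2^2=|\widehat{\mathbf{x}}(0)|^2+\sum_{t=0}^{a-1}\sum_{\ell\in H_t}|\widehat{\mathbf{x}}(\ell)|^2$. Writing $c_{-1}:=M|\widehat{\mathbf{y}}(0)|^2$ and $c_t:=\sum_{w\in H_t}|\widehat{\mathbf{y}}(w)|^2$ for $0\le t\le a-1$, the right-hand side above becomes the block-diagonal quadratic form $p\big(c_{-1}|\widehat{\mathbf{x}}(0)|^2+\sum_{t=0}^{a-1}c_t\sum_{\ell\in H_t}|\widehat{\mathbf{x}}(\ell)|^2\big)$ in the Fourier coordinates of $\mathbf{x}$, with all of $c_{-1},c_0,\dots,c_{a-1}$ nonnegative.

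Since any finite family in $\mathbb{C}^p$ is a Bessel sequence, the upper frame bound is automatic (one may take $B=p\max\{c_{-1},c_0,\dots,c_{a-1}\}$), and $\mathcal{W}(\mathbf{y},\Delta_\mathbb{M})$ is a frame exactly when the quadratic form above dominates $A\|\mathbf{x}\|_2^2$ for some $A>0$. Comparing term by term with the matching decomposition of $\|\widehat{\mathbf{x}}\|_2^2$, this holds iff every coefficient is strictly positive, i.e.\ $c_{-1}>0$ and $c_t>0$ for all $0\le t\le a-1$; when it does, $A:=p\min\{c_{-1},c_0,\dots,c_{a-1}\}$ works. For the ``if'' direction I would simply observe that (i) is the statement $c_{-1}=M|\widehat{\mathbf{y}}(0)|^2>0$, while (ii) --- the existence for each $t$ of $m_t\in\mathbb{M}$ with $\widehat{\mathbf{y}}(\epsilon^tm_t)\neq 0$ --- says exactly that $\widehat{\mathbf{y}}$ does not vanish identically on $H_t=\epsilon^t\mathbb{M}$, i.e.\ $c_t>0$; so all coefficients are positive and we have a frame.

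For the ``only if'' direction I would argue by contraposition using degenerate test vectors, available because $\mathcal{F}_p$ is a bijection of $\mathbb{C}^p$. If (i) fails then $c_{-1}=0$: taking $\mathbf{x}$ with $\widehat{\mathbf{x}}(0)=1$ and $\widehat{\mathbf{x}}(\ell)=0$ for $\ell\neq 0$ makes the left-hand side vanish while $\|\mathbf{x}\|_2=1$, so no lower bound can exist. If instead (ii) fails for some index $t_0$, then $\widehat{\mathbf{y}}$ vanishes on all of $H_{t_0}=\epsilon^{t_0}\mathbb{M}$, so $c_{t_0}=0$; picking $\ell_0\in H_{t_0}$ and $\mathbf{x}$ with $\widehat{\mathbf{x}}(\ell_0)=1$ and $\widehat{\mathbf{x}}(\ell)=0$ otherwise again kills the left-hand side with $\|\mathbf{x}\|_2=1$. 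In either case $\mathcal{W}(\mathbf{y},\Delta_\mathbb{M})$ does not span $\mathbb{C}^p$, hence is not a frame, which completes the equivalence.

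There is no real analytic difficulty once Theorem~\ref{f.f.s.coset} is in hand; the only points requiring care are the bookkeeping that $\{0\}\cup H_0\cup\dots\cup H_{a-1}$ is genuinely a partition of $\mathbb{Z}_p$ (Proposition~\ref{part} together with the coset decomposition of $\mathbb{U}_p$), the trivial reindexing $w\in\epsilon^t\mathbb{M}\iff w=\epsilon^tm$ with $m\in\mathbb{M}$ that matches condition (ii) to the positivity of $c_t$, and the standard fact that for finite systems being a frame is the same as spanning the space. The one subtlety worth stating explicitly is that the degenerate $\mathbf{x}$'s used in the ``only if'' part really do come from genuine elements of $\mathbb{C}^p$, which is immediate from the invertibility of the DFT.
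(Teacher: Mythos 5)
Your proof is correct and takes essentially the same route as the paper, which defers to Theorem 3.9 of \cite{RS.AC}: the characterization is read off from the coset expansion of Theorem \ref{f.f.s.coset}, with conditions (i) and (ii) exactly encoding strict positivity of the coefficient attached to each block $\{0\},H_0,\dots,H_{a-1}$ of the Fourier-side quadratic form, and the degenerate test vectors supplied by the invertibility of the DFT handle the converse.
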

\begin{proof}
See Theorem 3.9 \cite{RS.AC}
\end{proof}
The next result, in the matrix language also gives a constructive characterization for the frame conditions
of finite wavelet systems over prime fields.

\begin{corollary}\label{matrix.int}
{\it Let $p$ be a positive prime integer, $\epsilon$ be a generator of
$\mathbb{U}_p$,
 $M$ be a divisor of $p-1$,
$\mathbb{M}$ be a multiplicative subgroup of $\mathbb{U}_p$ of order $M$, and let
$a:=\frac{p-1}{M}$.
Let $\Delta_\mathbb{M}:=\mathbb{M}\times\mathbb{Z}_p$ and $\mathbf{y}\in\mathbb{C}^p$ be a non-zero window signal.
The finite wavelet system $\mathcal{W}(\mathbf{y},\Delta_\mathbb{M})$ is a frame for
$\mathbb{C}^p$ if and only if  $\widehat{\mathbf{y}}(0)\neq 0$ and the matrix ${\mathbf{Y}}({\mathbb{M},\mathbf{y}})$ of size $a\times M$ given by
$$
{\mathbf{Y}}({\mathbb{M},\mathbf{y}}):=\begin{bmatrix}
 \widehat{\mathbf {y}}(1)& \widehat{\mathbf {y}}(\epsilon^{a})& \cdots & \widehat{\mathbf {y}}(\epsilon^{(M-1)a}) \\
 \widehat{\mathbf {y}}(\epsilon^{1})& \widehat{\mathbf {y}}(\epsilon^{a+1})& \cdots & \widehat{\mathbf {y}}(\epsilon^{(M-1)a+1}) \\
 \vdots & \vdots & \ddots & \vdots\\
  \widehat{\mathbf {y}}(\epsilon^{a-1})& \widehat{\mathbf {y}}(\epsilon^{2a-1})&  \cdots&\widehat{\mathbf {y}}(\epsilon^{Ma-1})
 \end{bmatrix}_{a\times M}
 $$
is a matrix such that each row has at least a non-zero entry.}
\end{corollary}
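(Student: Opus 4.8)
The plan is to recognize this corollary as a matrix-language reformulation of Theorem \ref{m.f.result}; the only substantive input beyond that theorem is a precise description of the subgroup $\mathbb{M}$ in terms of the fixed generator $\epsilon$. First I would recall the relevant piece of elementary group theory: since $\mathbb{U}_p=\langle\epsilon\rangle$ is cyclic of order $p-1$ and $M\mid p-1$, there is a unique subgroup of order $M$, namely $\langle\epsilon^{a}\rangle$ with $a=\frac{p-1}{M}$; hence $\mathbb{M}=\{\epsilon^{ja}:0\le j\le M-1\}$. (This is also implicit in Proposition \ref{part}, where the cosets of $\mathbb{M}$ are realized as powers of $\epsilon$.)

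Next I would use this to rewrite condition (ii) of Theorem \ref{m.f.result}. An element $m_t\in\mathbb{M}$ has the form $m_t=\epsilon^{ja}$ for some $0\le j\le M-1$, so $\epsilon^t m_t=\epsilon^{ja+t}$, and condition (ii) becomes: for every $t\in\{0,\dots,a-1\}$ there exists $j\in\{0,\dots,M-1\}$ with $\widehat{\mathbf{y}}(\epsilon^{ja+t})\neq 0$. I would then match this against the displayed matrix $\mathbf{Y}(\mathbb{M},\mathbf{y})$: reading off its rows, the $(t{+}1)$-st row (for $0\le t\le a-1$) is exactly $\bigl(\widehat{\mathbf{y}}(\epsilon^{t}),\,\widehat{\mathbf{y}}(\epsilon^{a+t}),\,\dots,\,\widehat{\mathbf{y}}(\epsilon^{(M-1)a+t})\bigr)$, i.e. its $(t{+}1,s{+}1)$ entry equals $\widehat{\mathbf{y}}(\epsilon^{sa+t})$. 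Therefore "for every $t$ there is a $j$ with $\widehat{\mathbf{y}}(\epsilon^{ja+t})\neq 0$" is literally the assertion that each row of $\mathbf{Y}(\mathbb{M},\mathbf{y})$ contains at least one nonzero entry. Combining this with the unchanged condition $\widehat{\mathbf{y}}(0)\neq 0$ and invoking Theorem \ref{m.f.result} yields the claimed equivalence.

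For completeness I would also observe that $(t,s)\mapsto sa+t$ is a bijection from $\{0,\dots,a-1\}\times\{0,\dots,M-1\}$ onto $\{0,\dots,p-2\}$ (the division algorithm, using $aM=p-1$), so the $aM=p-1$ entries of $\mathbf{Y}(\mathbb{M},\mathbf{y})$ are precisely the values $\{\widehat{\mathbf{y}}(m):m\in\mathbb{U}_p\}$, each occurring once; this confirms that no frequency is omitted or double-counted apart from $\widehat{\mathbf{y}}(0)$, which is handled by the separate condition. I do not anticipate a genuine obstacle: all the analytic content lives in Theorem \ref{m.f.result}, and the only thing that must be done carefully is the bookkeeping that identifies $\mathbb{M}$ with the set of powers $\epsilon^{ja}$ and reads the rows of $\mathbf{Y}(\mathbb{M},\mathbf{y})$ against condition (ii).
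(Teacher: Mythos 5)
Your proof is correct and is essentially the intended argument: the paper only cites Corollary 3.10 of \cite{RS.AC} here, and that corollary is exactly the matrix-language restatement of Theorem \ref{m.f.result} obtained, as you do, by writing $\mathbb{M}=\langle\epsilon^{a}\rangle=\{\epsilon^{ja}:0\le j\le M-1\}$ (the unique subgroup of order $M$ in the cyclic group $\mathbb{U}_p$) and observing that the $(t{+}1,s{+}1)$ entry of $\mathbf{Y}(\mathbb{M},\mathbf{y})$ is $\widehat{\mathbf{y}}(\epsilon^{sa+t})$, so condition (ii) there is precisely the statement that every row has a nonzero entry. Your closing remark that $(t,s)\mapsto sa+t$ bijects onto $\{0,\dots,p-2\}$ is a nice sanity check but not logically needed for the equivalence.
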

\begin{proof}
See Corollary 3.10 \cite{RS.AC}.
\end{proof}

\section{\bf Finite equal norm Parseval wavelet frames over prime fields}

{
Weight frames, controlled frames \cite{B.WC}, and scalable frames \cite{KO.SC} have been already applied in order to tighten
and
also to produce Parseval frame of a given frame.
However, in general
the motivation of these methods in \cite{B.WC, KO.SC} have not been aimed at the norm equality of the frame vectors.
These methods are applied not only in finite dimensional Hilbert spaces but also in infinite dimensions.
In fact, in  the case of infinite dimensions, weighted frames algorithms are designed in such a way that decrease
the condition number and provide approximately a tight frame.
In \cite{KO.SC}, some equivalent conditions to scalable frames have been introduced.
The principal significance of the constructive approach in  the next theorem is that we apply
a local scaling on the DFT of a given window function. The structure of these
systems allows one
to simply manipulate  the DFT  of the system generator, in order to obtain an ENPF.

The following lemma provides an algebraic tool.
This permutation depends on
a multiplicative subgroup
$\mathbb{M}$
of $\mathbb{U}_p$ of a given size. Note that,
for each divisor of the size of a
finite cyclic group, there is exactly one subgroup of that size.
\begin{lemma}\label{permutation}
 Let $p$ be a positive prime integer, $\epsilon$ be a generator of $\mathbb{U}_p$,  $M$  be a divisor of $p-1$,
and let $a:=\frac{p-1}{M}$. Then
\begin{equation}\label{sigma2}
\sigma(\ell):=\begin{cases}
0,& \ell=0,\\
\epsilon^{\lfloor{\frac{\ell-1}{M}}\rfloor+
(\ell - \lfloor{\frac{\ell-1}{M}}\rfloor M -1)a},& \ell\in \mathbb{U}_p,
\end{cases}
\end{equation}
is a permutation of $\mathbb{Z}_p$ where $\lfloor.\rfloor$ denotes the floor function.
\end{lemma}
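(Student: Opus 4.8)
The plan is to show that $\sigma$ is a well-defined map $\mathbb{Z}_p \to \mathbb{Z}_p$ and then that it is a bijection, which for a finite set is equivalent to injectivity. Since $\sigma(0)=0$ and for $\ell \in \mathbb{U}_p$ the value $\sigma(\ell)$ is a power of the generator $\epsilon$, hence a nonzero element of $\mathbb{Z}_p$, the map does send $\mathbb{Z}_p$ into itself and fixes $0$; so it suffices to prove that $\sigma$ restricted to $\mathbb{U}_p$ is injective, or equivalently that it hits each of the $p-1$ elements of $\mathbb{U}_p$ exactly once. The key observation is that, for $\ell \in \{1,\dots,p-1\}$, the quantities $q := \lfloor \frac{\ell-1}{M}\rfloor$ and $r := \ell - qM - 1$ are precisely the quotient and remainder of the division of $\ell - 1$ by $M$. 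Thus as $\ell$ ranges over $\mathbb{U}_p = \{1,\dots,p-1\}$, the pair $(q,r)$ ranges over $\{0,\dots,a-1\}\times\{0,\dots,M-1\}$ bijectively, via the standard base-$M$ decomposition $\ell - 1 = qM + r$.

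With this reparametrization, $\sigma(\ell) = \epsilon^{\,q + r a}$, so the question reduces to: is the map $(q,r) \mapsto q + ra$ a bijection from $\{0,\dots,a-1\}\times\{0,\dots,M-1\}$ onto the exponent set relevant for $\epsilon$? Here $\epsilon$ has multiplicative order $p-1 = aM$, so powers of $\epsilon$ are determined by their exponent modulo $aM$. But $q + ra$ with $0 \le q \le a-1$ and $0 \le r \le M-1$ is exactly the base-$a$ decomposition of an integer in $\{0,1,\dots,aM-1\}$: every integer $n$ in that range is uniquely written as $n = q + ra$ with $q \in \{0,\dots,a-1\}$, $r \in \{0,\dots,M-1\}$. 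Hence $(q,r)\mapsto q+ra$ is a bijection onto $\{0,\dots,aM-1\}$, a complete residue system modulo $p-1$, and therefore $\ell \mapsto \epsilon^{\,q+ra}$ is a bijection from $\mathbb{U}_p$ onto $\langle \epsilon\rangle = \mathbb{U}_p$. Combining the two bijections (first $\ell \leftrightarrow (q,r)$, then $(q,r) \leftrightarrow \epsilon^{q+ra}$) gives that $\sigma|_{\mathbb{U}_p}$ is a bijection, and adjoining $\sigma(0)=0$ shows $\sigma$ is a permutation of $\mathbb{Z}_p$.

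To write this out cleanly I would: (1) fix $\ell \in \{1,\dots,p-1\}$ and record the division-with-remainder identity $\ell - 1 = qM + r$ with $q = \lfloor\frac{\ell-1}{M}\rfloor$, $0 \le r \le M-1$, noting $0 \le q \le a-1$ because $0 \le \ell - 1 \le aM - 1$; (2) verify $\sigma$ is injective on $\mathbb{U}_p$ by supposing $\sigma(\ell) = \sigma(\ell')$, passing to exponents modulo $aM$, using $0 \le q + ra \le aM - 1$ to drop the modular reduction, then using uniqueness of the base-$a$ representation to get $q=q'$, $r=r'$, hence $\ell = \ell'$; (3) count: $\sigma$ is an injection of the $(p-1)$-element set $\mathbb{U}_p$ into itself, hence a bijection, and $\sigma(0)=0$ completes the argument. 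The only mild subtlety — really the one place to be careful — is bookkeeping the two nested base decompositions and making sure the exponent $q + ra$ genuinely stays in the range $\{0,\dots,aM-1\}$ so that no wraparound modulo $p-1$ occurs; once that is pinned down the rest is immediate.
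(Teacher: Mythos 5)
Your proof is correct and follows essentially the same route as the paper's: both rest on the division-with-remainder decomposition $\ell-1=qM+r$ and on the fact that the exponents $q+ra$ with $0\le q\le a-1$, $0\le r\le M-1$ exhaust $\{0,\dots,aM-1\}$ exactly once, which (since $\epsilon$ has order $p-1=aM$) yields each element of $\mathbb{U}_p$ exactly once. The only cosmetic difference is that the paper phrases this via surjectivity and the coset partition $\mathbb{U}_p=\bigcup_{t=0}^{a-1}\epsilon^t\langle\epsilon^a\rangle$, exhibiting the explicit preimage $\sigma(tM+k+1)=\epsilon^{t+ka}$, whereas you argue injectivity through uniqueness of the mixed-radix representation of the exponent --- equivalent bookkeeping for the same bijection.
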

\begin{proof}
Clearly $\sigma$ is well-defined. In order to show that $\sigma$ is bijective it suffices to prove that $\sigma:\mathbb{U}_p\longrightarrow \mathbb{U}_p$ is surjective.  For any $\ell\in\mathbb{U}_p$ there exists $0\leq t\leq a-1$ such that $tM+1\leq \ell\leq (t+1)M$ and so it is clear that $\lfloor \frac{\ell-1}{M}\rfloor=t$. Thus the definition implies that $\sigma(\ell)=\epsilon^{t+
(\ell-t M -1)a}$; in particular if $1\leq \ell\leq M$, we have $\sigma(\ell)=\epsilon^{(\ell-1)a}$. We observe that $\mathbb{M}=<\epsilon^a>$ is a subgroup of $\mathbb{U}_p$ of order $M$ and  $\mathbb{U}_p=\bigcup_{t=0}^{a-1}\limits\epsilon^t \mathbb{M}$, where
$\epsilon^t \mathbb{M}=\{\epsilon^t,...,\epsilon^{t+(M-1)a}\}$ for any $0\leq t\leq a-1$ and $\epsilon^t \mathbb{M}\cap\epsilon^s \mathbb{M}=\emptyset$ for any $t\neq s$. Given an arbitrary $x\in\mathbb{U}_p$, there exists $0\leq t\leq a-1$ such that $x\in\epsilon^t \mathbb{M}$ and so there exists $0\leq k\leq M-1$ such that $x=\epsilon^{t+ka}$. One can easily check that $\sigma(tM+k+1)=x$.
\end{proof}

Here, we apply lemma \ref{permutation}, in the following theorem.
At first, the permutation defined in (\ref{sigma2}), is
served
 to give
a regular
rearrangement  to frequency components of the DFT of
the
 window function
and classify them in order to
set cluster scales.
Next again via this permutation, we
are able to reverse the locally-scaled permuted version of the DFT of the window function to
merely a
locally-scaled version of the window function. Since the scales are defined positive
thus, still this last version
 fulfills the frame conditions.
Under the described procedure, we are
thus led
to construct a finite equal-norm Parseval
frame of such systems for the Hilbert space $\mathbb{C}^p$.

\begin{theorem}\label{parseval}
{\it  Let $p$ be a positive prime integer, $\epsilon$ be
  a generator of $\mathbb{U}_p$
, $\mathbb{M}$ be a multiplicative subgroup of $\mathbb{U}_p$
of order $M$,
 $a:= \frac{p-1}{M}$ be the index of $\mathbb{M}$ in $\mathbb{U}_p$,
 and let $\sigma$ is defined by
\begin{align}\label{sigma}
\sigma:=\begin{pmatrix}
0 & 1 & 2 &\cdots & \ell & \cdots & p-1 \\
0 & \epsilon^0 & \epsilon^a & \cdots & \epsilon^{\lfloor{\frac{\ell-1}{M}}\rfloor+
(\ell - \lfloor{\frac{\ell-1}{M}} \rfloor M -1)a} & \cdots & \epsilon^{(Ma -1)}
\end{pmatrix}.
\end{align}
Let $\mathbf{y}\in\mathbb{C}^p$ be a non-zero window signal and
\begin{align}\label{yzeg}
\nonumber &{\widehat{\mathbf{y}}}'(\ell):= \widehat{\mathbf{y}}(\sigma(\ell))=
\begin{cases}
\widehat{\mathbf{y}}(0),& \ell=0,\\
\widehat{\mathbf{y}}(\epsilon^{\lfloor{\frac{\ell-1}{M}}\rfloor+
(\ell - \lfloor{\frac{\ell-1}{M}} \rfloor M -1)a}),& \ell\in \mathbb{U}_p,
\end{cases}\\&
{\widehat{\mathbf{y}}}'':= D(\mathbb{M},\mathbf{y})\widehat{\mathbf{y}}',
\end{align}
where
$$
{\mathbf{D}}({\mathbb{M},\mathbf{y}}):=
\begin{bmatrix}
\begin{bmatrix}
U'
\end{bmatrix}_{1\times 1}
\\
&\begin{bmatrix}
U_0
\end{bmatrix}_{M\times M}\\
&&\begin{bmatrix}
U_1
\end{bmatrix}_{M\times M}\\
&&&\ddots\\
&&&&\begin{bmatrix}
U_{a-1}
\end{bmatrix}_{M\times M}\\
\end{bmatrix}_{p\times p}
$$
is a diagonal matrix with diagonal block matrices
$\begin{bmatrix}
U'
\end{bmatrix}_{1\times 1}$
and
$\begin{bmatrix}
U_t
\end{bmatrix}_{M\times M}$
for $ t\in\{0,...,a-1\}$,
which are constant along diagonals by entries
$R':= \frac{1}{\sqrt{pM}|\widehat{\mathbf{y}}(0)|^2}$
and
$R_t:=\frac{1}{\sqrt{\sum_{r_t=0}^{M-1} p |\widehat{\mathbf{y}}(\epsilon^{r_t a+t})|^2}}$,
respectively.
 If $\mathbf{y}$ satisfies the following conditions
\begin{enumerate}[\normalfont (i)]
\item $\widehat{\mathbf{y}}(0)\neq 0$,
\item for each $t\in \{0,...,a-1\}$, there exists $r_t\in \{0,...,M-1\}$ such that  $\widehat{\mathbf{y}}(\epsilon^{r_t  a+t})\neq 0$,
\end{enumerate}
then $\mathcal{W}(\mathbf{y_{\sigma}},\Delta_\mathbb{M})$ is an equal-norm Parseval frame for $\mathbb{C}^p$, where
$\widehat{\mathbf{y}}_{\sigma}(\sigma(\ell)):={\widehat{\mathbf{y}}}''(\ell)$.
}
\end{theorem}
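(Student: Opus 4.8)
The plan is to follow the three successive operations on the window's spectrum — permuting by $\sigma$ to form $\widehat{\mathbf{y}}'$, applying the block‑diagonal scaling $\mathbf{D}(\mathbb{M},\mathbf{y})$ to form $\widehat{\mathbf{y}}''$, and un‑permuting by $\sigma^{-1}$ to form $\widehat{\mathbf{y}}_\sigma$ — and to show that the composite is nothing but a coset‑wise rescaling of $\widehat{\mathbf{y}}$; after that, the Parseval property is read off from Theorem \ref{f.f.s.coset} and the equal‑norm property is automatic. First I would record the combinatorial content of $\sigma$: exactly as in the proof of Lemma \ref{permutation}, for each $t\in\{0,\dots,a-1\}$ the permutation $\sigma$ carries the length‑$M$ block of indices $\{tM+1,\dots,tM+M\}$ bijectively onto $\epsilon^{t}\mathbb{M}=\{\epsilon^{ra+t}:0\le r\le M-1\}$, while $\sigma(0)=0$; by Proposition \ref{part} the sets $\{0\}$ and $\epsilon^{t}\mathbb{M}$, $0\le t\le a-1$, partition $\mathbb{Z}_p$. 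Since $\mathbf{D}(\mathbb{M},\mathbf{y})$ is diagonal, equal to the scalar $R'$ on coordinate $0$ and to the scalar $R_t$ on the whole $t$‑th block, the constancy of each block $U_t$ makes the internal bijection irrelevant: the defining relation $\widehat{\mathbf{y}}_\sigma(\sigma(\ell))=\widehat{\mathbf{y}}''(\ell)$ unwinds to $\widehat{\mathbf{y}}_\sigma(0)=R'\,\widehat{\mathbf{y}}(0)$ and $\widehat{\mathbf{y}}_\sigma(w)=R_t\,\widehat{\mathbf{y}}(w)$ for every $w\in\epsilon^{t}\mathbb{M}$. Conditions (i)--(ii) guarantee that $R'$ and every $R_t$ are well‑defined positive reals, so $\mathbf{y}_\sigma$ (the inverse DFT of $\widehat{\mathbf{y}}_\sigma$) is a genuine non‑zero window, and because these scalars are non‑zero, $\widehat{\mathbf{y}}_\sigma$ still meets the frame criterion of Theorem \ref{m.f.result}.

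Next I would compute the only two quantities that enter Theorem \ref{f.f.s.coset}. From the definitions of $R'$ and $R_t$ one gets
\[
pM\,|\widehat{\mathbf{y}}_\sigma(0)|^{2}=pM\,(R')^{2}|\widehat{\mathbf{y}}(0)|^{2}=1,\qquad
p\sum_{w\in\epsilon^{t}\mathbb{M}}|\widehat{\mathbf{y}}_\sigma(w)|^{2}=p\,R_t^{2}\sum_{r=0}^{M-1}|\widehat{\mathbf{y}}(\epsilon^{ra+t})|^{2}=1
\]
for every $t\in\{0,\dots,a-1\}$. Substituting $\mathbf{y}_\sigma$ for $\mathbf{y}$ in Theorem \ref{f.f.s.coset}, with $H_t=\epsilon^{t}\mathbb{M}$, collapses the right‑hand side to $|\widehat{\mathbf{x}}(0)|^{2}+\sum_{t=0}^{a-1}\sum_{\ell\in H_t}|\widehat{\mathbf{x}}(\ell)|^{2}$, and by the partition of $\mathbb{Z}_p$ from Proposition \ref{part} this equals $\sum_{\ell=0}^{p-1}|\widehat{\mathbf{x}}(\ell)|^{2}=\|\widehat{\mathbf{x}}\|_2^{2}=\|\mathbf{x}\|_2^{2}$ by the Plancherel formula. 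Hence $\sum_{(m,k)\in\Delta_\mathbb{M}}|\langle\mathbf{x},T_kD_m\mathbf{y}_\sigma\rangle|^{2}=\|\mathbf{x}\|_2^{2}$ for all $\mathbf{x}\in\mathbb{C}^p$, i.e.\ $\mathcal{W}(\mathbf{y}_\sigma,\Delta_\mathbb{M})$ is a Parseval frame for $\mathbb{C}^p$.

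Finally, the equal‑norm property requires no further work: every vector of $\mathcal{W}(\mathbf{y}_\sigma,\Delta_\mathbb{M})$ has the form $T_kD_m\mathbf{y}_\sigma$ with $T_k$ and $D_m$ unitary in the $\|\cdot\|_2$‑norm (Proposition \ref{prop31} and Section 2), so they all share the common norm $\|\mathbf{y}_\sigma\|_2=\big(|\widehat{\mathbf{y}}_\sigma(0)|^{2}+\sum_{t=0}^{a-1}\sum_{w\in H_t}|\widehat{\mathbf{y}}_\sigma(w)|^{2}\big)^{1/2}=\big(\tfrac1{pM}+\tfrac ap\big)^{1/2}=M^{-1/2}$. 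The only step that needs genuine care is the first one — verifying that the reshuffling by $\sigma$ matches the block partition underlying $\mathbf{D}(\mathbb{M},\mathbf{y})$ so that conjugating the block scaling by $\sigma$ is precisely the coset‑wise scaling asserted in (\ref{yzeg}); everything after that is a direct substitution into Theorem \ref{f.f.s.coset} together with the Plancherel formula.
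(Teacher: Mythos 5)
Your proposal is correct and follows essentially the same route as the paper's proof: unwind $\widehat{\mathbf{y}}_\sigma$ through the permutation $\sigma$ and the block-constant diagonal scaling to see it is a coset-wise rescaling of $\widehat{\mathbf{y}}$, substitute into Theorem \ref{f.f.s.coset} so each coset sum (and the $0$-frequency term) normalizes to give $\|\mathbf{x}\|_2^2$, and deduce equal norms from the unitarity of $T_k$ and $D_m$. Your explicit evaluation of the common norm $\|\mathbf{y}_\sigma\|_2=M^{-1/2}$ is a small bonus beyond the paper's argument, and your reading of $R'$ (with $|\widehat{\mathbf{y}}(0)|^2$ under the square root) matches what the paper's computation actually uses.
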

\begin{proof}
Let $\mathbf{y}\in \mathbb{C}^p$ be a window function which satisfies conditions (i) and (ii)
and  $\mathbf{x}\in \mathbb{C}^p$. Then
$|\widehat{\mathbf{y}}(0)|^2\neq 0$ and for any $t\in \{0,...,a-1\}$, we have
$ \sum_{r_t=0}^{M-1} |\widehat{\mathbf{y}}(\epsilon^{r_t a+t})|^2\neq0.$
Thus, $R'$ and $R_t$ are well-defined for all $t\in \{0,...,a-1\}$.
By Lemma \ref{permutation}, $\sigma$ presents a permutation of $\mathbb{Z}_p$ and
$\sigma^{-1}(\epsilon^{t+r_ta})=r_t+tM+1$,
for any $t\in \{0,...,a-1\}$ and $r_t\in \{0,...,M-1\}.$
By Theorem \ref{f.f.s.coset}, we obtain
\begin{align*}
&\sum_{m\in\mathbb{M}}\sum_{k=0}^{p-1}|\langle\mathbf{x},T_kD_m\mathbf{y_{\sigma}}\rangle|^2
=p M|\widehat{\mathbf{x}}(0)|^2|{\widehat{\mathbf{y}}_{\sigma}}(0)|^2+ p\sum_{t=0}^{a-1}\bigg(\sum_{\ell\in H_t}|
\widehat{\mathbf{x}}(\ell)|^2\bigg)\bigg(\sum_{w\in H_t}|{\widehat{\mathbf{y}}_{\sigma}}(w)|^2\bigg)
\\&=p M|\widehat{\mathbf{x}}(0)|^2|{\widehat{\mathbf{y}}_{\sigma}}(0)|^2+ p\sum_{t=0}^{a-1}\bigg(\sum_{\ell\in H_t}|
\widehat{\mathbf{x}}(\ell)|^2\bigg)\bigg(\sum_{r_t=0}^{M-1}\bigg
|{\widehat{\mathbf{y}}_{\sigma}}({\epsilon}^{r_t a+t})\bigg|^2\bigg)
\\& = p M|\widehat{\mathbf{x}}(0)|^2|\widehat{\mathbf{y}}''(0)|^2+ p\sum_{t=0}^{a-1}\bigg(\sum_{\ell\in H_t}|
\widehat{\mathbf{x}}(\ell)|^2\bigg)\bigg(\sum_{r_t=0}^{M-1}\bigg|\widehat{\mathbf{y}}''\bigg({\sigma}^{-1}
({\epsilon}^{r_ta+t})\bigg)\bigg|^2\bigg)
\\& = p M|\widehat{\mathbf{x}}(0)|^2|R'\widehat{\mathbf{y}}(0)|^2+ p\sum_{t=0}^{a-1}\bigg(\sum_{\ell\in H_t}|
\widehat{\mathbf{x}}(\ell)|^2\bigg)\bigg(\sum_{r_t=0}^{M-1}\bigg|\widehat{\mathbf{y}}''\bigg({\sigma}^{-1}
({\epsilon}^{r_ta+t})\bigg)\bigg|^2\bigg)
\\& = |\widehat{\mathbf{x}}(0)|^2 + p\sum_{t=0}^{a-1}\bigg(\sum_{\ell\in H_t}|
\widehat{\mathbf{x}}(\ell)|^2\bigg)\bigg(\sum_{r_t=0}^{M-1}\bigg|\widehat{\mathbf{y}}''(tM+r_t+1)\bigg|^2\bigg)
\\& = |\widehat{\mathbf{x}}(0)|^2+ p\sum_{t=0}^{a-1}\bigg(\sum_{\ell\in H_t}|
\widehat{\mathbf{x}}(\ell)|^2\bigg)\bigg(\sum_{r_t=0}^{M-1}\bigg|(D(\mathbb{M},\mathbf{y})\widehat{\mathbf{y}}')
(tM+r_t+1)\bigg|^2\bigg)
\end{align*}
Now by (\ref{yzeg}), we get
\begin{align*}
{\widehat{\mathbf{y}}}''(\ell):= D(\mathbb{M},\mathbf{y})\widehat{\mathbf{y}}'(\ell)=
\begin{cases}
R'\widehat{\mathbf{y}}'(0),& \ell=0,\\
R_{\lfloor{\frac{\ell-1}{M}}\rfloor}\widehat{\mathbf{y}}'(\ell),& \ell\in \mathbb{U}_p.
\end{cases}
\end{align*}
Thus, ${\widehat{\mathbf{y}}}''(tM+r_t+1)= R_t \widehat{\mathbf{y}}'(tM+r_t+1)$ and we have
\begin{align*}
&\sum_{m\in\mathbb{M}}\sum_{k=0}^{p-1}|\langle\mathbf{x},T_kD_m\mathbf{y_{\sigma}}\rangle|^2
\\& = |\widehat{\mathbf{x}}(0)|^2
+ p\sum_{t=0}^{a-1}\bigg(\sum_{\ell\in H_t}|
\widehat{\mathbf{x}}(\ell)|^2\bigg)\bigg(\sum_{r_t=0}^{M-1}\bigg|R_t \widehat{\mathbf{y}}'(tM+r_t+1)\bigg)\bigg|^2\bigg)
\\& = |\widehat{\mathbf{x}}(0)|^2
+ p\sum_{t=0}^{a-1}\bigg(\sum_{\ell\in H_t}|
\widehat{\mathbf{x}}(\ell)|^2\bigg)\bigg(\sum_{r_t=0}^{M-1}\bigg|R_t\widehat{\mathbf{y}}\bigg(\sigma(tM+r_t+1)\bigg)\bigg|^2\bigg)
\\& = |\widehat{\mathbf{x}}(0)|^2
+ p\sum_{t=0}^{a-1}\bigg(\sum_{\ell\in H_t}|
\widehat{\mathbf{x}}(\ell)|^2\bigg)\bigg(\sum_{r_t=0}^{M-1}\bigg|R_t\widehat{\mathbf{y}}(\epsilon^{r_ta+t})\bigg|^2\bigg)
\\& = |\widehat{\mathbf{x}}(0)|^2
+ p\sum_{t=0}^{a-1}\bigg(\sum_{\ell\in H_t}|
\widehat{\mathbf{x}}(\ell)|^2\bigg)\bigg(\sum_{r_t=0}^{M-1}\bigg|\frac{1}{\sqrt{\sum_{r_t=0}^{M-1} p |\widehat{\mathbf{y}}(\epsilon^{r_ta+t})|^2}}\widehat{\mathbf{y}}(\epsilon^{r_ta+t})\bigg|^2\bigg)
\\& = |\widehat{\mathbf{x}}(0)|^2 +  \sum_{t=0}^{a-1}\bigg(\sum_{\ell\in H_t}|
\widehat{\mathbf{x}}(\ell)|^2\bigg)
= |\widehat{\mathbf{x}}(0)|^2 + \sum_{\ell=1}^{p-1}
|\widehat{\mathbf{x}}(\ell)|^2 =  \|\widehat{\mathbf{x}}\|_{2}^2=  \|\mathbf{x}\|_{2}^2.
\end{align*}
Therefore, $\mathcal{W}(\mathbf{y_{\sigma}},\Delta_\mathbb{M})$ is a Parseval frame for $\mathbb{C}^p$.
Note that, for any $k\in \mathbb{Z}_p$ and $m\in \mathbb{M}$, the operators $T_k$ and $D_m$ are unitary
operators and,
\begin{align*}
\| T_k D_m \mathbf{y_{\sigma}}\|_{2}=
\|\mathbf{y_{\sigma}}\|_{2}.
\end{align*}
Hence, $\mathcal{W}(\mathbf{y_{\sigma}},\Delta_\mathbb{M})$ is an equal-norm Parseval frame for
$\mathbb{C}^p$.
\end{proof}
Next example demonstrates the design method of equal-norm Parseval finite wavelet system over prime fields, described
in Theorem \ref{parseval}, in a numerical case.
\begin{example}
Let $p=7$ and $\mathbb{U}_7$ be the multiplicative group modulo 7.
Then $|\mathbb{U}_p|=6$.
One of its generators is 3.
Consider $\mathbb{M}$ be a subgroup of size 3 of $\mathbb{U}_7$.
So
\begin{align*}
\mathbb{M}=<3^{\frac{6}{(3,6)}}>=<2>=
\{1,2,4\}.
\end{align*}
We use Theorem \ref{parseval}
to choose an appropriate window signal $\mathbf{y}\in \mathbb{C}^7$
which satisfies the conditions (i) and (ii).

For simplicity in computation, we will consider a special case with least number
of non-zero components
of $\widehat{\mathbf{y}}$.
Since the conditions (i) and (ii) rely on the DFT of $\mathbf{y}$,
so we use IDFT
to get $\mathbf{y}$.
Now, let $$\widehat{\mathbf{y}}=(1,1,0,1,0,0,0).$$ Then
\begin{align*}
&\mathbf{y}=( 1.1339, 0.2731 + 0.4595i,
  0.5295 + 0.0730i, -0.0467 + 0.5325i,
\\& -0.0467 - 0.5325i, 0.5295 - 0.0730i
, 0.2731 - 0.4595i ),
 \end{align*}
and $\mathcal{W}(\mathbf{y},\Delta_\mathbb{M})$ is a frame for $\mathbb{C}^7$.
By Theorem \ref{parseval}, we have
\begin{align*}
\sigma:=\begin{pmatrix}
0 & 1 & 2 & 3 & 4 & 5 & 6 \\
0 & 3^0 & 3^2 & 3^4 & 3^1 & 3^3 & 3^5
\end{pmatrix}
=\begin{pmatrix}
0 & 1 & 2 & 3 & 4 & 5 & 6 \\
0 & 1 & 2 & 4 & 3 & 6 & 5
\end{pmatrix},
\end{align*}
and
\begin{align*}
&\widehat{\mathbf{y}}'=(\widehat{\mathbf{y}}({\sigma}(0)), \widehat{\mathbf{y}}({\sigma}(1)),\widehat{\mathbf{y}}({\sigma}(2)),\widehat{\mathbf{y}}({\sigma}(3))
,\widehat{\mathbf{y}}({\sigma}(4)),\widehat{\mathbf{y}}({\sigma}(5)),\widehat{\mathbf{y}}({\sigma}(6))
)\\&=(\widehat{\mathbf{y}}(0), \widehat{\mathbf{y}}(1), \widehat{\mathbf{y}}(2), \widehat{\mathbf{y}}(4), \widehat{\mathbf{y}}(3),
\widehat{\mathbf{y}}(6), \widehat{\mathbf{y}}(5))=(1,1,0,0,1,0,0).
\end{align*}
 Also
$$
{\mathbf{D}}({\mathbb{M},\mathbf{y}}):=
\begin{bmatrix}
\begin{bmatrix}
\frac{1}{\sqrt{21}}
\end{bmatrix}_{1\times 1}
\\
&\begin{bmatrix}
\frac{1}{\sqrt{7}}\\
&&\frac{1}{\sqrt{7}}\\
&&& \frac{1}{\sqrt{7}}\\
\end{bmatrix}_{3\times 3}\\
&&\begin{bmatrix}
\frac{1}{\sqrt{7}}\\
&&\frac{1}{\sqrt{7}}\\
&&& \frac{1}{\sqrt{7}}\\
\end{bmatrix}_{3\times 3}\\
\end{bmatrix}_{7\times 7}.
$$
By performing ${\mathbf{D}}({\mathbb{M},\mathbf{y}})$
on $\widehat{\mathbf{y}}'$ we get a locally-scaled version of
$\widehat{\mathbf{y}}'$ which yields the following
signals $\widehat{\mathbf{y}}''$ and $\widehat{\mathbf{y}}_{\sigma}$ given by
\begin{align*}
\widehat{\mathbf{y}}''=(\frac{1}{\sqrt{21}},\frac{1}{\sqrt{7}},0,0,\frac{1}{\sqrt{7}},0,0)
=( 0.2182, 0.3780,
 0, 0, 0.3780, 0, 0 ).
\end{align*}
and
\begin{align*}
\widehat{\mathbf{y}}_{\sigma}=(\frac{1}{\sqrt{21}},\frac{1}{\sqrt{7}},0,\frac{1}{\sqrt{7}},0,0,0)
=( 0.2182, 0.3780,
 0, 0.3780, 0, 0,0 ).
\end{align*}
Again using IDFT for $\widehat{\mathbf{y}}_{\sigma}$, we get
\begin{align*}
&{\mathbf{y}}_{\sigma}=( 0.3682, 0.0428 + 0.1737i, 0.1398 + 0.0276i, -0.0780 + 0.2013i,
\\&  -0.0780 - 0.2013i, 0.1398 - 0.0276i,  0.0428 - 0.1737i ).
 \end{align*}
 Also
\begin{align*}
H_0=\mathbb{M}=\{1,2,4\},\\
H_1=\{3,6,5\}.
\end{align*}
By Theorem \ref{f.f.s.coset} and a simple calculation for any $\mathbf{x}\in \mathbb{C}^7$, we have
\begin{align*}
&\sum_{m\in\mathbb{M}}\sum_{k=0}^{6}|\langle\mathbf{x},T_kD_m\mathbf{y_{\sigma}}\rangle|^2
=7\bigg(3 |\widehat{\mathbf{x}}(0)|^2 |\widehat{\mathbf{y}}_{\sigma}(0)|^2 + \sum_{t=0}^{1}\bigg(\sum_{\ell\in H_t}|\widehat{\mathbf{x}}(\ell)|^2\bigg)\bigg(\sum_{\ell\in H_t}|\widehat{\mathbf{y}}_{\sigma}(\ell)|^2\bigg)\bigg)
\\&=7\bigg(3 |\widehat{\mathbf{x}}(0)|^2 |\widehat{\mathbf{y}}_{\sigma}(0)|^2 + \bigg(|\widehat{\mathbf{x}}(1)|^2+|\widehat{\mathbf{x}}(2)|^2 + |\widehat{\mathbf{x}}(4)|^2\bigg)\bigg(|\widehat{\mathbf{y}}_{\sigma}(1)|^2+|\widehat{\mathbf{y}}_{\sigma}(2)|^2 + |\widehat{\mathbf{y}}_{\sigma}(4)|^2\bigg)
\\&+\bigg(|\widehat{\mathbf{x}}(3)|^2+|\widehat{\mathbf{x}}(6)|^2 + |\widehat{\mathbf{x}}(5)|^2\bigg)\bigg(|\widehat{\mathbf{y}}_{\sigma}(3)|^2+|\widehat{\mathbf{y}}_{\sigma}(6)|^2 + |\widehat{\mathbf{y}}_{\sigma}(5)|^2\bigg)\bigg)
\\&=7\bigg(3 |\widehat{\mathbf{x}}(0)|^2 |\frac{1}{\sqrt{21}}|^2 + \bigg(|\widehat{\mathbf{x}}(1)|^2+|\widehat{\mathbf{x}}(2)|^2 + |\widehat{\mathbf{x}}(4)|^2\bigg)\bigg(|\frac{1}{\sqrt{7}}|^2+|0|^2 + |0|^2\bigg)
\\&+\bigg(|\widehat{\mathbf{x}}(3)|^2+|\widehat{\mathbf{x}}(6)|^2 + |\widehat{\mathbf{x}}(5)|^2\bigg)\bigg(|\frac{1}{\sqrt{7}}|^2+|0|^2 + |0|^2\bigg)\bigg)
=\|\mathbf{x}\|^2
\end{align*}
Hence, $\mathcal{W}(\mathbf{y_{\sigma}},\Delta_\mathbb{M})$ is an equal norm Parseval finite wavelet frame for $\mathbb{C}^7$.
\end{example}
In \cite{RS.AC}, the matrix notion presented in Corollary \ref{matrix.int}, have been applied
as a useful tool to determine whether a finite wavelet system
forms a frame for $\mathbb{C}^p$.
As another application of this notion, the next result for any given window function derives a characterization of all multiplicative subgrups of $\mathbb{U}_p$,
for which the associated wavelet system
form
 frames for $\mathbb{C}^p$.
\begin{theorem}\label{car}
{\it Let $p$ be a positive prime integer, $\mathbf{y}\in \mathbb{C}^p$ such that $\widehat{\mathbf{y}}(0)\neq 0$, and let
\begin{align*}
\Phi(p)=p-1= \prod_{i=1}^{k} q_i^{{\alpha}_i}
\end{align*}
be the  factorization of $\Phi(p)$ into prime powers, where the prime factors $q_i$ are distinct such that
$q_1< q_2<...<q_k$, $k\geq1$ and ${{\alpha}_i}\geq 0$. Let $\epsilon$ be a generator of $\mathbb{U}_p$ and
\begin{align*}
\Lambda:=& \{(r_1,r_2,...,r_k) : \prod_{i=1}^{k}q_i^{r_i}\leq \|\widehat{\mathbf{y}}\|_{0}-1,  0\leq r_i\leq \alpha_{i},\hspace{.1cm}
{\mathbf{Y}}(<\epsilon^{\prod_{i=1}^{k}q_i^{r_i}}>,\mathbf{y})\hspace{.1cm}
\\&has\hspace{.1cm} \prod_{i=1}^{k}q_i^{r_i} nonzero\hspace{.1cm}rows\}.
\end{align*}
Then the set of all subgroups of $\mathbb{U}_p$ for which the associated finite wavelet systems
are frames for $\mathbb{C}^p$ consists of exactly those subgroups $\mathbb{M}$ of
$\mathbb{U}_p$ of the form
\begin{align*}
\mathbb{M}=<\epsilon^{\prod_{i=1}^{k}q_i^{r_i-s_i}}>,\hspace{.5cm}((r_1,...,r_k)\in\Lambda,\hspace{.1cm}0\leq s_i\leq r_i,\hspace{.1cm} i=1,...,k).
\end{align*}
Hence, the order of any such $\mathbb{M}$ is
\begin{align*}
|\mathbb{M}|=\prod_{i=1}^{k}q_i^{\alpha_i-r_i+s_i}
\end{align*}
}
\end{theorem}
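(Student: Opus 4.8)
The plan is to reduce everything to the frame criterion of Theorem \ref{m.f.result} (equivalently, Corollary \ref{matrix.int}) and then organize the bookkeeping around the lattice of subgroups of the cyclic group $\mathbb{U}_p$. Since $\widehat{\mathbf{y}}(0)\neq 0$ is assumed throughout, condition (i) of Theorem \ref{m.f.result} is automatic, and a finite wavelet system $\mathcal{W}(\mathbf{y},\Delta_{\mathbb{M}})$ is a frame for $\mathbb{C}^p$ if and only if condition (ii) holds: for every coset $\epsilon^t\mathbb{M}$ there is some $m_t\in\mathbb{M}$ with $\widehat{\mathbf{y}}(\epsilon^t m_t)\neq 0$. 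By Corollary \ref{matrix.int} this is exactly the statement that every row of the $a\times M$ matrix $\mathbf{Y}(\mathbb{M},\mathbf{y})$ has a nonzero entry, where $a=(p-1)/|\mathbb{M}|$ is the number of cosets (rows). I would take this ``all rows nonzero'' condition as the working definition of ``$\mathbb{M}$ is a frame subgroup'' for the rest of the argument.

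Next I would set up the correspondence between subgroups and divisors. Because $\mathbb{U}_p=\langle\epsilon\rangle$ is cyclic of order $\Phi(p)=p-1=\prod_{i=1}^k q_i^{\alpha_i}$, for each divisor $d\mid(p-1)$ there is exactly one subgroup of order $(p-1)/d$, namely $\langle\epsilon^d\rangle$, and every subgroup arises this way; writing $d=\prod_i q_i^{e_i}$ with $0\le e_i\le\alpha_i$ parametrizes all subgroups. So it suffices to identify, among the tuples $(e_1,\dots,e_k)$, exactly those for which $\langle\epsilon^d\rangle$ is a frame subgroup, and then to check that the description in the statement — namely $d=\prod_i q_i^{r_i-s_i}$ with $(r_1,\dots,r_k)\in\Lambda$ and $0\le s_i\le r_i$ — enumerates precisely this set. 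The key monotonicity observation is: if $\mathbb{M}\subseteq\mathbb{M}'$ are two subgroups and $\mathbb{M}$ is a frame subgroup, then so is $\mathbb{M}'$. Indeed, each coset of $\mathbb{M}'$ is a union of cosets of $\mathbb{M}$, so a nonzero entry of $\widehat{\mathbf{y}}$ witnessing a coset of $\mathbb{M}$ also witnesses the containing coset of $\mathbb{M}'$; equivalently, each row of $\mathbf{Y}(\mathbb{M}',\mathbf{y})$ contains all entries of several rows of $\mathbf{Y}(\mathbb{M},\mathbf{y})$. Thus the collection of frame subgroups is upward closed in the subgroup lattice, which is why it is natural to describe it by its minimal elements (indexed by $\Lambda$) together with all their over-groups (the $s_i$ parameters, which shrink the exponent $d$ and hence enlarge the subgroup).

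Then comes the role of the support size $\|\widehat{\mathbf{y}}\|_0$. A frame subgroup $\mathbb{M}$ of index $a$ forces $a$ distinct cosets each to contain a point of $\mathrm{supp}(\widehat{\mathbf{y}})$; but $0\in\mathrm{supp}(\widehat{\mathbf{y}})$ lies in its own singleton orbit $\{0\}$ (not in any $\epsilon^t\mathbb{M}$), so the nonzero frequencies must hit all $a$ cosets, giving $\|\widehat{\mathbf{y}}\|_0-1\ge a$; hence the index of any frame subgroup is at most $\|\widehat{\mathbf{y}}\|_0-1$. This is precisely the constraint $\prod_i q_i^{r_i}\le\|\widehat{\mathbf{y}}\|_0-1$ appearing in the definition of $\Lambda$, so $\Lambda$ records exactly the admissible \emph{index} tuples $(r_1,\dots,r_k)$ (index $=\prod_i q_i^{r_i}$) for which the matrix test of Corollary \ref{matrix.int} actually passes. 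I would then verify the two inclusions: (a) every $\mathbb{M}=\langle\epsilon^{\prod q_i^{r_i-s_i}}\rangle$ with $(r_i)\in\Lambda$, $0\le s_i\le r_i$, is a frame subgroup — this follows from the monotonicity observation, since such $\mathbb{M}$ contains the frame subgroup $\langle\epsilon^{\prod q_i^{r_i}}\rangle$; (b) conversely, if $\mathbb{M}$ is any frame subgroup, write its index as $\prod q_i^{e_i}$; I must produce $(r_i)\in\Lambda$ and $0\le s_i\le r_i$ with $e_i=r_i-s_i$, which amounts to finding a frame subgroup contained in $\mathbb{M}$ whose index tuple lies in $\Lambda$ and dominates $(e_i)$ coordinatewise — the natural candidate being $\mathbb{M}$ itself if its own index tuple $(e_i)$ already lies in $\Lambda$ (take $s_i=0$), and otherwise one must check this cannot happen, i.e. that a frame subgroup's index tuple automatically satisfies the two defining constraints of $\Lambda$. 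The size formula $|\mathbb{M}|=\prod_i q_i^{\alpha_i-r_i+s_i}$ is then immediate since $|\mathbb{M}|=(p-1)/\mathrm{index}=(p-1)/\prod q_i^{r_i-s_i}$.

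The main obstacle I anticipate is part (b) of the last step: showing that the ``minimal'' frame subgroups are captured by $\Lambda$ with the right monotonicity, i.e. that whenever $\mathbb{M}$ is a frame subgroup, its index tuple $(e_1,\dots,e_k)$ itself already satisfies $\prod q_i^{e_i}\le\|\widehat{\mathbf{y}}\|_0-1$ and the ``$\prod q_i^{e_i}$ nonzero rows'' condition — so that one may take $r_i=e_i$, $s_i=0$. The support-counting argument above handles the first inequality; the second is just a restatement of Corollary \ref{matrix.int} for $\mathbb{M}$ applied with the chosen generator $\epsilon$, once one observes that the number of rows of $\mathbf{Y}(\mathbb{M},\mathbf{y})$ equals the index $\prod q_i^{e_i}$ and ``frame'' means all rows nonzero. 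The only genuinely delicate point is consistency of the generator: the matrices $\mathbf{Y}(\cdot,\mathbf{y})$ and the permutation underlying Corollary \ref{matrix.int} are defined relative to a fixed generator $\epsilon$, and one must make sure the same $\epsilon$ (fixed at the outset) is used uniformly for all subgroups under consideration; since every subgroup of $\mathbb{U}_p$ is of the form $\langle\epsilon^d\rangle$ for that one $\epsilon$, this causes no trouble, but it should be stated explicitly. I would close by assembling (a) and (b) into the claimed set equality and recording the order formula.
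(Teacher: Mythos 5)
Your proposal is correct and follows essentially the same route as the paper: reduce to the row condition of Corollary \ref{matrix.int} (with $\widehat{\mathbf{y}}(0)\neq 0$ given), bound the admissible index by $\|\widehat{\mathbf{y}}\|_{0}-1$ via the support of $\widehat{\mathbf{y}}$ on $\mathbb{U}_p$, and use upward closure of the frame property under subgroup containment (the paper phrases this as $\mathcal{W}(\mathbf{y},\Delta_{\mathbb{M}})\subseteq\mathcal{W}(\mathbf{y},\Delta_{\mathbb{M}'})$, you phrase it via coset refinement --- the same observation). Your explicit resolution of the converse direction, taking $r_i=e_i$, $s_i=0$ for a frame subgroup, is in fact a slightly cleaner write-up of the step the paper treats somewhat tersely.
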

\begin{proof}
Let $m:=\|\widehat{\mathbf{y}}\|_0-1$. Since $p-1=\Phi(p)=|\mathbb{U}_p|$, so the set of divisors
of $|\mathbb{U}_p|$ is the set of numbers of the form $\prod_{i=1}^{k}q_i^{r_i}$, where
$0\leq r_i\leq \alpha_i$ for $i=1,...,k$.

Suppose $\mathbb{M}$ be an arbitrary multiplicative subgroup of $\mathbb{U}_p$.
Then, we have $\mathbb{M}=<\epsilon^{\frac{p-1}{(|\mathbb{M}|,p-1)}}>=<\epsilon^{\frac{p-1}{|\mathbb{M}|}}>$.
Let  $a_{\mathbb{M}}:=\frac{p-1}{|\mathbb{M}|}$.
The order of $\mathbb{M}$  is a divisor of $p-1$, thus $a_{\mathbb{M}}$ is of the form $\prod_{i=1}^{k}q_i^{r_i}$
for some $0\leq r_i\leq \alpha_i$, $i=1,...,k$.
For any such $\mathbb{M}$, there are two cases to consider, $a_{\mathbb{M}}> m$ or $a_{\mathbb{M}}\leq m$.

Case $a_{\mathbb{M}}=\prod_{i=1}^{k}q_i^{r_i}> m$. In this case, by Corollary \ref{matrix.int},
the frame conditions for the finite wavelet system $\mathcal{W}(\mathbf{y},\Delta_\mathbb{M})$
can not be satisfied. In fact, we have some zero rows in ${\mathbf{Y}}(\mathbb{M},\mathbf{y})$.

Case $a_{\mathbb{M}}=\prod_{i=1}^{k}q_i^{r_i}\leq m$. Again applying Corollary \ref{matrix.int},
 $\mathcal{W}(\mathbf{y},\Delta_\mathbb{M})$ is a finite wavelet frame if and only if
${\mathbf{Y}}(\mathbb{M},\mathbf{y})$ has $a_{\mathbb{M}}$ non-zero rows.
Also if $\mathbb{M}'$ be a subgroup of $\mathbb{U}_p$ such that $|\mathbb{M}|=\prod_{i=1}^{k}q_i^{\alpha_i -r_i} \bigg||\mathbb{M}'|$, i.e., $\mathbb{M}$ is a subgroup of $\mathbb{M}'$ then $\mathcal{W}(\mathbf{y},\Delta_\mathbb{M})
\subseteq \mathcal{W}(\mathbf{y},\Delta_\mathbb{M'})$.
By this, if $\mathcal{W}(\mathbf{y},\Delta_\mathbb{M})$ is a frame for $\mathbb{C}^p$ then, $\mathcal{W}(\mathbf{y},\Delta_\mathbb{M'})$ is also a frame for $\mathbb{C}^p$.
In other words, the remaining cases of subgroups $\mathbb{M'}$ of $\mathbb{U}_p$ satisfying the frame conditions  of the
finite wavelet system $\mathcal{W}(\mathbf{y},\Delta_\mathbb{M'})$ for $\mathbb{C}^p$ are those of the form,
$\mathbb{M}=<\epsilon^{\prod_{i=1}^{k}q_i^{r_i-s_i}}>$
or
 $|\mathbb{M}'|=\prod_{i=1}^{k}q_i^{\alpha_i-r_i+s_i}$ for $(r_1,...,r_k)\in\Lambda$
and $0\leq s_i\leq r_i $.
\end{proof}
Next example gives a numerical illustration of Theorem \ref{car}.
\begin{example}
Let $p=13$ and $\mathbf{y}\in \mathbb{C}^{13}$, such that
\begin{align*}
\widehat{\mathbf{y}}=a\delta_0+b\delta_2+c\delta_3+d\delta_{8}+e\delta_{11}+f\delta_{12}, \ \   \ \  \ \ (a,b,c,d,e,f \neq0),
\end{align*}
where $\{\delta_i\}_{i=0}^{12}$
is an orthonormal basis for $\mathbb{C}^{13}$. Then $\|\widehat{\mathbf{y}}\|_0 -1=5$ and
$|\mathbb{U}_{13}|=\Phi(13)=12=2^2 \times 3^1$. Also $\epsilon=2$ is a generator of
$\mathbb{U}_{13}$. The only divisors of $12$ so that are equal or less than $5$ are, 1,2,3,4. Thus by definition of $\Lambda$, we have $\Lambda\subseteq\{(0,0),(1,0),(0,1),(2,0)\}$.

Let $m_1=(2,0)$, then
 $\mathbb{M}_1=<2^4>=<3>$ and we get
$$
{\mathbf{Y}}({\mathbb{M}_1,\mathbf{y}}):=
\begin{bmatrix}
 \widehat{\mathbf {y}}(1)& \widehat{\mathbf {y}}(3)& \widehat{\mathbf {y}}(9) \\
 \widehat{\mathbf {y}}(2)& \widehat{\mathbf {y}}(6)& \widehat{\mathbf {y}}(5)   \\
\widehat{\mathbf {y}}(4)& \widehat{\mathbf {y}}(12)& \widehat{\mathbf {y}}(10) \\
\widehat{\mathbf {y}}(8)& \widehat{\mathbf {y}}(11)& \widehat{\mathbf {y}}(7)
 \end{bmatrix}_{4\times 3}
 =\begin{bmatrix}
 0& c& 0 \\
 b& 0&0 \\
0&f& 0\\
 d&e& 0
 \end{bmatrix}_{4\times 3}
 $$
Thus, $\mathcal{W}(\mathbf{y},\Delta_{\mathbb{M}_1})$ is a frame for $\mathbb{C}^{13}$.
Since $\mathbb{M}_1$ is of order 3, hence the corresponding wavelet systems to subgroups of sizes
6 and 12  i.e., $\mathbb{M}_2=<2^2>=<4>$ and $\mathbb{M}_3=<2^1>=\mathbb{U}_{13}$ are also frames for $\mathbb{C}^{13}$. Note that $\mathbb{M}_2$ and $\mathbb{M}_3$ are corresponding subgroups  to $m_2=(1,0)$ and $m_3=(0,0)$ respectively.
So, now we just examine in the case of $m_4=(0,1)$. We have $\mathbb{M}_4=<2^3>=<8>$, and
$$
{\mathbf{Y}}({\mathbb{M}_4,\mathbf{y}}):=
\begin{bmatrix}
 \widehat{\mathbf {y}}(1)& \widehat{\mathbf {y}}(8)& \widehat{\mathbf {y}}(12) & \widehat{\mathbf {y}}(5) \\
 \widehat{\mathbf {y}}(2)& \widehat{\mathbf {y}}(3)& \widehat{\mathbf {y}}(11) & \widehat{\mathbf {y}}(10)  \\
\widehat{\mathbf {y}}(4)& \widehat{\mathbf {y}}(6)& \widehat{\mathbf {y}}(9) & \widehat{\mathbf {y}}(7)
 \end{bmatrix}_{3\times 4}
 =\begin{bmatrix}
 0& d& f & 0 \\
 b& c&0 & e \\
0&0& 0 & 0
 \end{bmatrix}_{3\times 4}
 $$
Hence, $\mathcal{W}(\mathbf{y},\Delta_{\mathbb{M}_4})$ is not a frame for $\mathbb{C}^{13}$.
\end{example}
}
\noindent\textbf{Acknowledgements}

\vspace{.5cm}
Some of the results are obtained during  the second author's appointment from the NuHAG group at the University of Vienna. We would like to thank Prof. Hans. G. Feichtinger for his valuable comments and the group for their hospitality.

\hspace{1in}


\begin{thebibliography}{20}


\bibitem{arefi.k1}
A.A. Arefijamaal, R.A. Kamyabi-Gol,
\textit{On the square integrability of quasi regular representation on semidirect product groups},
J. Geom. Anal. {\bf{19}} (3) (2009) 541-552.

\bibitem{arefi.z1}
A.A. Arefijamaal, E. Zekaee,
\textit{Signal processing by alternate dual Gabor frames},
Appl. Comput. Harmon. Anal. {\bf{35}} (2013) 535-540.

\bibitem{arefi.z2}
A.A. Arefijamaal, E. Zekaee,
\textit{Image processing
by alternate dual Gabor frames}, Bull. Iranian Math. Soc. {\bf{42}}(6) (2016) 1305-1314.




\bibitem{P.Bal}
P. Balazs,
\textit{Frames and Finite Dimensionality: Frame Transformation, Classification and Algorithms},
Appl. Math. Sci. {\bf{2}}(43) (2008) 2131-2144.

\bibitem{B.WC} P. Balazs, J.P. Antonio, A. Grybos,
\textit{Weighted and controlled frames},
Int. J. Wavelets Multiresolut. Inf. Process. {\bf{8}}(1)  (2010) 109-132.


\bibitem{BC.AR}
 B.G. Bodmann, P.G. Casazza, \textit{The road to equal-norm Parseval frames},
J. Funct. Anal. {\bf{258}} (2010) 397-420.


\bibitem{BKP.QU}
B.G. Bodmann, D.W. Kribs, V.I. Paulsen,
\textit{Decoherence-insensitive quantum communications by optimal C*-encoding},
IEEE Trans. Inform. Theory. {\bf{53}} ( 2007) 4738-4749.


\bibitem{CK}
P.G. Casazza, J. Kova$\check{c}$evi$\acute{c}$,
\textit{Equal-norm tight frames with erasures},
Adv. Comput. Math. {\bf{18}} (2003) 387-430.

\bibitem{ff}
P.G. Casazza, G. Kutyniok.
\textit{Finite Frames, Theory and Applications},
Applied and Numerical Harmonic Analysis (Birkh\"{a}user, Boston, 2013).

\bibitem{CL} P.G. Casazza, N. Leonhard,
\textit{Classes of equal-norm Parseval frames with erasures},
Contemp. Math. {\bf{451}} (2008)  11-31.

\bibitem{Chris.Rahimi}
O. Christensen, A. Rahimi,
\textit{Frame properties of wave packet systems in $L^2(\mathbb{R}^{d})$},
Adv. Comput. Math. {\bf{29}}(2) (2008) 101-111.


\bibitem{f.g.h.t}
K. Flornes, A. Grossmann, M. Holschneider and B. Torr\'esani,
\textit{Wavelets on discrete fields}, Appl. Comput. Harmon. Anal. {\bf{1}} (1994) 137-146.

\bibitem{AGHF} A. Ghaani Farashahi,
 \textit{Cyclic wave packet transform on finite Abelian groups of prime order}, Int. J. Wavelets Multiresolut. Inf. Process. {\bf{12}}(6) (2014), Article ID 1450041, 14 pages.

\bibitem{AGHF.claswelt}
A. Ghaani Farashahi,
\textit{Classical wavelet transforms over finite fields}, J. Linear Topol. Algebra,
{\bf{4}}(4) (2015) 241-257.

 \bibitem{AGHF.cwps.p}
A. Ghaani Farashahi,
\textit{Cyclic wavelet systems in prime dimensional linear vector spaces}, Wavelets Linear Algebra {\bf{2}}(1) (2015) 11-24.


\bibitem{AGHF.wpt.ff} A. Ghaani Farashahi,
\textit{Wave packet transform over finite fields}, Electron. J. Linear Algebra,  {\bf{30}} (2015) 507-529.

\bibitem{AGHF.claswels}
A. Ghaani Farashahi,
\textit{Classical wavelet systems over finite fields},
Wavelets and Linear Algebra, {\bf{3}}(2) (2016) 1-18.

\bibitem{AGHF.wpt.n}
A. Ghaani Farashahi,
\textit{Wave packet transforms over finite cyclic groups}, Linear Algebra Appl. {\bf{489}} (2016) 75-92.

\bibitem{AGHF.SFWFPF}
A. Ghaani Farashahi, \textit{Structure of finite wavelet frames over prime fields},
Bull. Iranian Math. Soc. {\bf{43}}(1) (2017) 109-120.

\bibitem{AGHF.thf}
A. Ghaani Farashahi,
\textit{Theoretical frame properties of wave-packet matrices over prime fields}, Linear and Multilinear Algebra (2017)
to appear, http://dx.doi.org/10.1080/03081087.2016.1278196.



\bibitem{AGHF.MMP}
A. Ghaani Farashahi and M. Mohammad-Pour,
\textit{A unified theoretical harmonic analysis approach to the cyclic wavelet transform
(CWT) for periodic signals of prime dimensions}, Sahand Commun. Math. Anal. {\bf{1}}(2) (2014) 1-17.

\bibitem{G.S}
V. K. Goyal,
\textit{Single and multiple description transform coding with bases and frames}, (SIAM  2002).


\bibitem{GKV.QF}
V. K. Goyal, J. J. Kova$\check{c}$evi$\acute{c}$ and M. Vetterli,
\textit{Quantized frame expansions with erasures}, J. Appl. Comput. harmon. Anal. {\bf{10}}(3) (2001) 203-233.

\bibitem{Hardy}
G.H. Hardy, E.M. Wright,
\textit{An Introduction to the Theory of Numbers}, (Oxford University Press, 1979).

\bibitem{HHS.MAC}
B. Hassibi, B. Hochwald, A. Shokrollahi and W. Sweldens,
\textit{Representation theory for high rate multiple antenna code design}, IEEE Trans. Inform. Theory {\bf{47}}(6) (2001) 2355-2367.


\bibitem{j}
C.P. Johnston,
\textit{On the pseudodilation representations of flornes, grossmann, holschneider, and torr\'esani}, J. Fourier Anal. Appl. {\bf{3}}(4) (1997) 377-385.

\bibitem{KDG.FBS}
J. Kova$\check{c}$evi$\acute{c}$, P.L. Dragotti, V.K. Goyal,
\textit{Filter banks expansions with erasures}, IEEE Trans. Inform. Theory {\bf{48}}(6) (2002) 1439-1450.

\bibitem{KO.SC}
G. Kutyniok, K.A. Okoudjou, F. Philipp and E.K. Tuley,
\textit{scalable frames}, IEEE Trans. Inform. Theory {\bf{438}}(5) (2013) 2225-2238.


\bibitem{Pf} G. Pfander.
\textit{Gabor Frames in Finite Dimensions},
in {Finite Frames}, G.E. Pfander (eds). P.G. Casazza, and G. Kutyniok,
, Applied Numerical Harmonic Analysis
(Birkh\"{a}user/Springer, New York, 2013) pp. 193-239.

\bibitem{RS.AC}
A. Rahimi, N. Seddighi,
\textit{A constructive approach to the finite wavelet frames over prime fields},  Proc. Math. Sci.
(2017) to appear.


\end{thebibliography}
\end{document}